\newtheorem{theorem}{Theorem}[section]
\newtheorem{lemma}[theorem]{Lemma}
\newcommand{\R}{\mathbb{R}}
\renewcommand{\i}{\imath}
\newcommand{\eps}{\varepsilon}
\newcommand{\p}{\partial}
\numberwithin{equation}{section}
\begin{document}

\title{A Fast Butterfly Algorithm for the Computation of Fourier
  Integral Operators}

\author{Emmanuel J. Cand\`es$^{\dagger}$
\and Laurent Demanet$^{\sharp}$
\and Lexing Ying$^{\S}$\\
  \vspace{-.1cm}\\
  $\dagger$ Applied and Computational Mathematics,
  Caltech, Pasadena, CA 91125\\
  \vspace{-.3cm}\\
  $\sharp$  Department of Mathematics, Stanford University,
  Stanford, CA 94305\\
  \vspace{-.3cm}\\
  $\S$ Department of Mathematics, University of Texas,
  Austin, TX 78712}

\date{August 2008}
\maketitle

\begin{abstract}
  This paper is concerned with the fast computation of Fourier
  integral operators of the general form $\int_{\R^d} e^{2\pi\i
    \Phi(x,k)} f(k) d k$, where $k$ is a frequency variable,
  $\Phi(x,k)$ is a phase function obeying a standard homogeneity
  condition, and $f$ is a given input. This is of interest for such
  fundamental computations are connected with the problem of finding
  numerical solutions to wave equations, and also frequently arise in
  many applications including reflection seismology, curvilinear
  tomography and others.  In two dimensions, when the input and output
  are sampled on $N \times N$ Cartesian grids, a direct evaluation
  requires $O(N^4)$ operations, which is often times prohibitively expensive. 

  This paper introduces a novel algorithm running in $O(N^2 \log N)$
  time, i.~e.~with near-optimal computational complexity, and whose
  overall structure follows that of the butterfly algorithm
  \cite{michielssen-1996-mmda}.  Underlying this algorithm is a
  mathematical insight concerning the restriction of the kernel
  $e^{2\pi\i \Phi(x,k)}$ to subsets of the time and frequency
  domains. Whenever these subsets obey a simple geometric condition,
  the restricted kernel has approximately low-rank; we propose
  constructing such low-rank approximations using a special
  interpolation scheme, which prefactors the oscillatory component,
  interpolates the remaining nonoscillatory part and, lastly,
  remodulates the outcome. A byproduct of this scheme is that the
  whole algorithm is highly efficient in terms of memory
  requirement. Numerical results demonstrate the performance and
  illustrate the empirical properties of this algorithm.
\end{abstract}

{\bf Keywords.} Fourier integral operators, the butterfly algorithm,
dyadic partitioning, Lagrange interpolation, separated representation,
multiscale computations.

{\bf AMS subject classifications.} 44A55, 65R10, 65T50.

%----------------------------------------------------------
\section{Introduction}
\label{sec:intro}

%------------

This paper introduces an efficient algorithm for evaluating discrete
Fourier integral operators. Let $N$ be a positive integer, which is
assumed to be an integer power of 2 with no loss of generality, and
define the Cartesian grids $X = \{(i_1/N, i_2/N), 0\le i_1, i_2 <
N\}$ and $\Omega = \{(k_1, k_2), -N/2 \le k_1 , k_2 <N/2\}$. A
discrete Fourier integral operator (FIO) with constant amplitude is
defined by
\begin{equation}
  u(x) = \sum_{k\in \Omega} e^{2\pi\i \Phi(x,k)} f(k), \quad  x\in X,
  \label{eq:dfio}
\end{equation}
where $\{f(k), k\in \Omega\}$ is a given input, $\{u(x), x\in X\}$ is
the output and as usual, $\i = \sqrt{-1}$. By an obvious analogy with
problems in electrostatics, it will be convenient throughout the paper
to refer to $\{f(k), k\in \Omega\}$ as {\em sources} and $\{u(x), x\in
X\}$ as {\em potentials}. Here, the {\em phase function} $\Phi(x,k)$
is assumed to be smooth in $(x,k)$ for $k\not=0$ and obeys an
homogeneity condition of degree 1 in $k$, namely, $\Phi(x,\lambda k) =
\lambda \Phi(x,k)$ for each $\lambda > 0$.

A direct numerical evaluation of \eqref{eq:dfio} at all the points in
$X$ takes $O(N^4)$ flops, which can be very expensive for large values
of $N$. Surveying the literature, the main obstacle to constructing
fast algorithms for \eqref{eq:dfio} is the oscillatory behavior of the
kernel $e^{2\pi\i \Phi(x,k)}$ when $N$ is large, which prevents the
use of the standard multiscale techniques developed in
\cite{beylkin-1991-fwtna,
  boerm-2003-hm,greengard-1987-afaps,hackbusch-1989-fmubempc}.
Against this background, the contribution of this paper is to
introduce a novel algorithm running in $O(N^2\log N)$ operations,
where the constant is polylogarithmic in the prescribed accuracy
$\eps$.

\subsection{General strategy}

Because the phase function $\Phi(x,k)$ is singular at $k=0$, the first
step consists in representing the frequency variable $k$ in polar
coordinates via the transformation
\begin{equation}
  k = (k_1,k_2) = \frac{\sqrt{2}}{2} N  p_1 e^{2\pi\i p_2}, \quad 
e^{2\pi\i p_2}  = (\cos 2\pi p_2, \sin 2\pi p_2).
  \label{eq:polar}
\end{equation}
Here and below, the set of all possible points $p$ generated from
$\Omega$ is denoted by $P$, see Figure \ref{fig:ptree}(b). Note that
this transformation guarantees that each point $p=(p_1,p_2)$ belongs
to the unit square $[0,1]^2$ since $-N/2 \le k_1, k_2 < N/2$. Because
of the homogeneity of $\Phi$, the phase function $\Phi$ may be
expressed in polar coordinates as
\[
\Phi(x,k) = N \, \frac{\sqrt{2}}{2} \Phi\left(x, e^{2\pi\i p_2}
\right) p_1 := N \, \Psi(x,p). 
\]
Since $\Phi(x,k)$ is smooth in $(x,k)$ for $k\not=0$, $\Psi(x,p)$ is a
smooth function of $(x,p)$ with $x$ and $p$ in $[0,1]^2$.

\begin{figure}[h]
  \begin{center}
    \begin{tabular}{ccc}
      \includegraphics[height=1.8in]{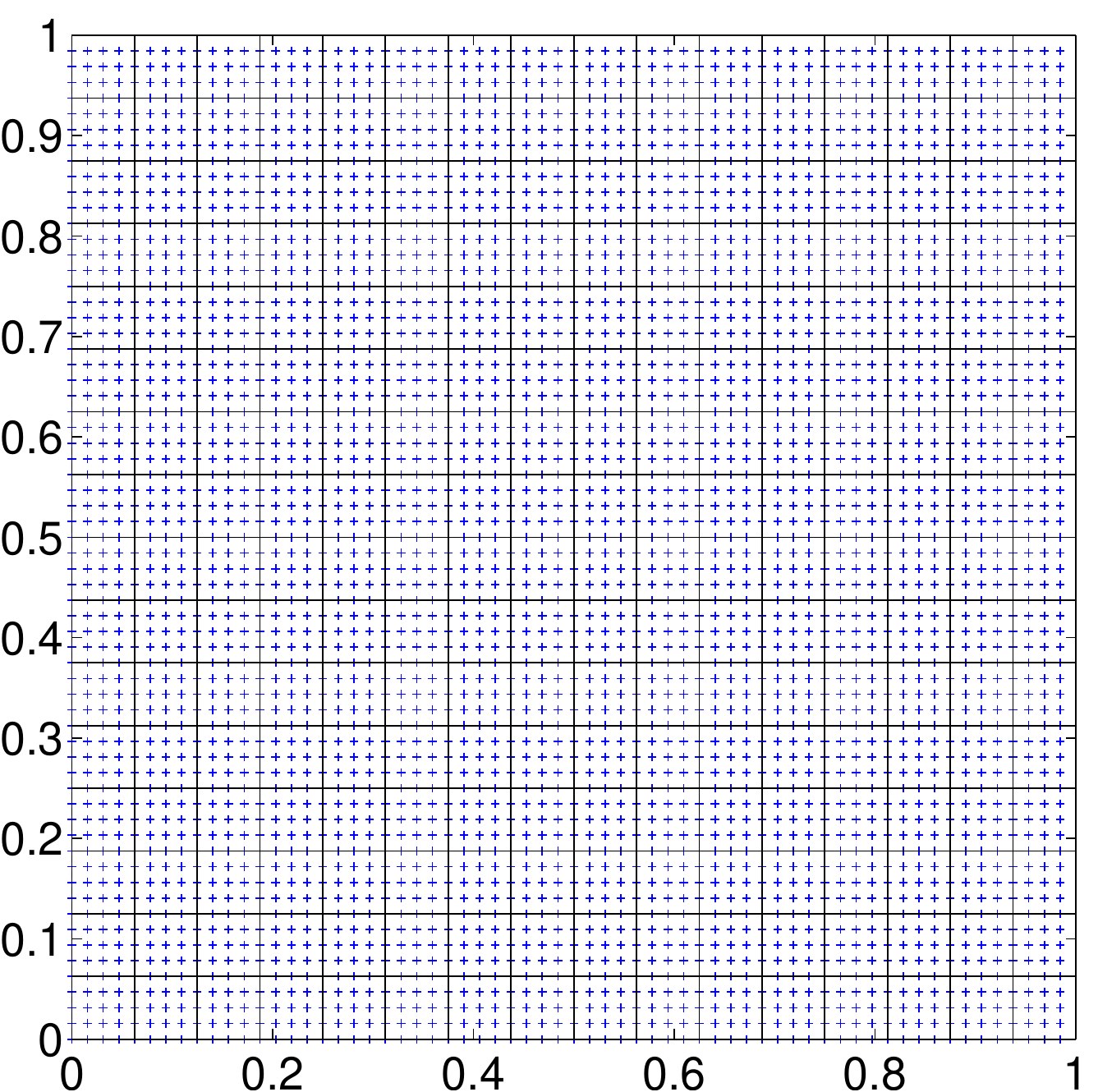} &
      \includegraphics[height=1.8in]{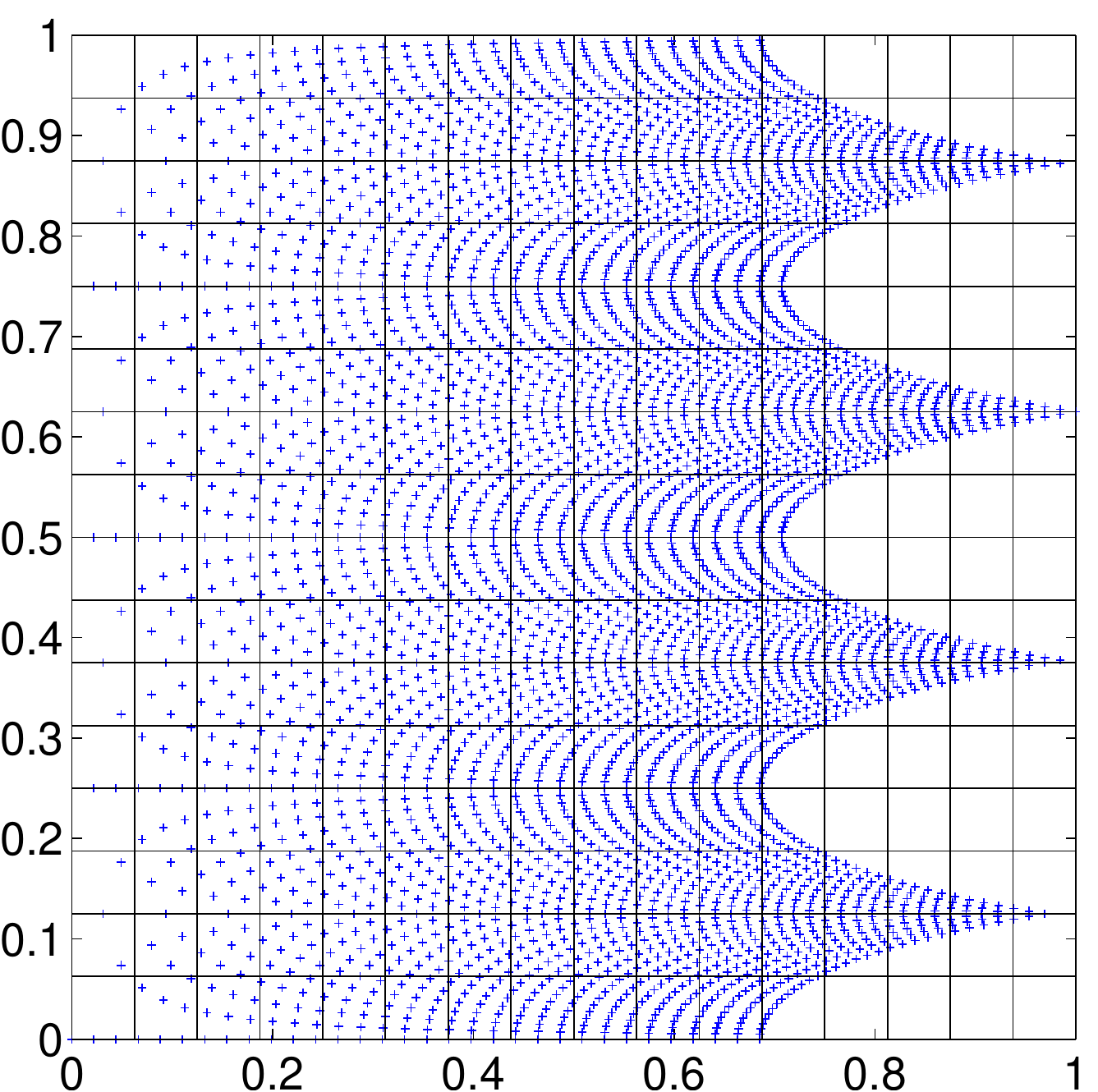} &
      \includegraphics[height=1.8in]{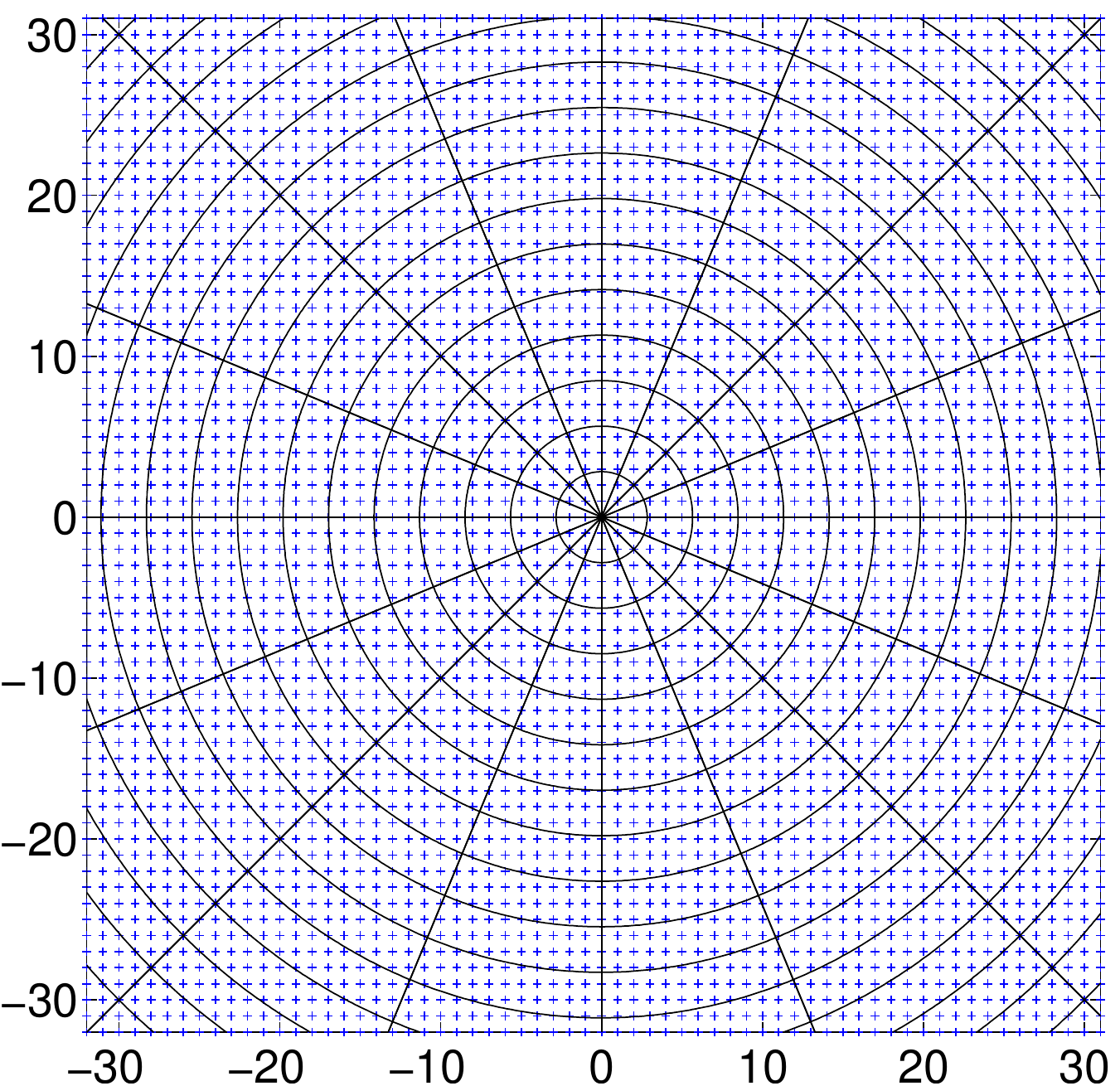} \\
      (a) & (b) & (c)
    \end{tabular}
  \end{center}
  \caption{ The point distribution and hierarchical partitioning (at a
    fixed level) for $N=64$.  (a) the set $X$.  (b) the set $P$ in
    polar coordinates.  (c) the frequency partitioning in Cartesian
    coordinates ($k \in \Omega$).  }
  \label{fig:ptree}
\end{figure}

With these notations, we can reformulate the computational problem
\eqref{eq:dfio} as
\[
u(x) = \sum_{p\in P} e^{2\pi\i N \Psi(x,p)} f(p), \quad x\in X,
\]
in which the sources $\{f(p)\}$ are now indexed by $p$ instead of $k$.
As we just mentioned, the main issue is that the kernel function
$e^{2\pi\i N \Psi(x,p)}$ is highly oscillatory. Our approach relies on
the observation that this kernel, properly restricted to time and
frequency subdomains, admits accurate and low-order separated
approximations.  To see why this is true, consider two square boxes
$A$ and $B$ in $[0,1]^2$ centered at $x_0(A)$ and $p_0(B)$, and suppose that
the sidelengths $w(A)$ and $w(B)$ obey the relationship $w(A)\,w(B) \le
1/N$. Introduce the new function
\begin{equation}
  R^{AB}(x,p) := \Psi(x,p)-\Psi(x_0(A),p)-\Psi(x,p_0(B)) + \Psi(x_0(A),p_0(B)),
  \label{eq:RAB}
\end{equation}
for each $x \in A$ and $p \in B$, and decompose the kernel $e^{2\pi\i
  N \Psi(x,p)}$ as
\begin{equation}
  e^{2\pi\i N \Psi(x,p)} = 
  e^{2\pi\i N \Psi(x_0(A),p)} \, e^{2\pi\i N \Psi(x,p_0(B))} \,
  e^{-2\pi\i N \Psi(x_0(A),p_0(B))} \,  e^{2\pi\i N R^{AB}(x,p)}.
  \label{eq:terms}
\end{equation}
In \eqref{eq:terms}, we note that each of the first three terms
depends on at most one variable ($x$ or $p$). Recall now the standard
multi-index notation; $i$ and $j$ are multi-indices and for
$i=(i_1,i_2)$, $i_1,i_2 \ge 0$, $|i| = i_1 + i_2$ and for
$x=(x_1,x_2)$, $x^i = x_1^{i_1} x_2^{i_2}$.  Applying the mean value
theorem to $R^{AB}(x,p)$ successively in $p$ and $x$ gives
\begin{eqnarray}
  \nonumber
  R^{AB}(x,p) & \le & \sup_{p^* \in B} \,\, \sum_{|j|=1} \left| \p^j_p [\Psi(x,p^*) - \Psi(x_0(A),p^*)] \right| \, |(p-p_0(B))^j|\\
  \nonumber
  & \le & \sup_{x^* \in A} \, \sup_{p^* \in B} \,\, \sum_{|i|=1} \sum_{|j|=1} \left| \p^i_x \p^j_p \Psi(x^*,p^*) \right| \, |(x-x_0(A))^i| \, |(p-p_0(B))^j|\\
  & = & O(1/N).
  \label{eq:R}
\end{eqnarray}
The last equation follows from the smoothness of $\Psi$ and from the
assumption $w(A)\,w(B)\le 1/N$. To summarize, \eqref{eq:R} gives $2\pi N
\, R^{AB}(x,p) = O(1)$ and, therefore, the complex exponential
$e^{2\pi\i N R^{AB}(x,p)}$ is nonoscillatory. 

Under some mild smoothness condition, this observation guarantees that
for any fixed accuracy $\eps$, there exists a low-rank separated
approximation of $e^{2\pi\i N R^{AB}(x,p)}$, valid over $A \times B$,
effectively decoupling the spatial variable $x$ from the frequency
variable $p$.  We propose constructing this low-rank approximation
using a tensor-product Chebyshev interpolation of the function
$e^{2\pi\i N R^{AB}(x,p)}$ in the $x$ variable when $w(A) \le
1/\sqrt{N}$, and in the $p$ variable when $w(B) \le 1/\sqrt{N}$. Since
the first three terms in \eqref{eq:terms} depend on at most one
variable, one also has a separated approximation of $e^{2\pi\i N
  \Psi(x,p)}$ with exactly the same separation rank. Looking at
\eqref{eq:terms}, the resulting low-rank approximation of the kernel
$e^{2\pi\i N \Psi(x,p)}$ can be viewed as a special interpolation
scheme that prefactors the oscillatory component, interpolates the
remaining nonoscillatory part, and finally appends the oscillatory
component. As we will see later, the separation rank providing an
$\eps$-approximation, for any fixed $\eps$, is bounded from above by a
constant independent of $N$. Further, if we define the potential
generated by the sources $p$ inside $B$ for any fixed box $B$ as
\begin{equation}
  u^B(x) = \sum_{p\in B} e^{2\pi\i N \Psi(x,p)} f(p),
  \label{eq:uB}
\end{equation}
then the existence of such a separated approximation implies the
existence of a compact expansion for the restriction of $u^B(x)$ to
$A$, $\{u^B(x), x\in A\}$, of the form 
\begin{equation}
  \label{eq:compact}
  u^B(x) \approx \sum_{1 \le j \le r} \sum_{p \in B} \alpha_j^{AB}(x) \beta_j^{AB}(p) f(p) = \sum_{1 \le j \le r} \delta_j^{AB} \alpha_j^{AB}(x), \qquad \delta_j^{AB} = \sum_{p \in B}  \beta_j^{AB}(p) f(p).
\end{equation}
In \eqref{eq:compact}, the number $r$ of expansion coefficients $
\delta_j^{AB}$ is independent of $N$ for a fixed relative error
$\eps$, as we will see later.

% EJC: The current status is that these paragraphs read well now. We
% repeat ourselves a little but this may serve a pedagogical
% purpose. We might want to revisit this later, however.
The problem is then to compute these compact expansions. This is where
the basic structure of the butterfly algorithm
\cite{michielssen-1996-mmda,oneil-2007-ncabft} is powerful. A brief
overview is as follows. We start by building two quadtrees $T_X$ and
$T_P$ (see Figure \ref{fig:ptree}(a) and (b)) respectively in the
spatial and frequency domains with leaf nodes at level $L=\log_2
N$. For each leaf node $B \in T_P$, we first construct the expansion
coefficients for the potential $\{u^B(x), x\in A\}$ where $A$ is the
root node of $T_X$. This can be done efficiently because $B$ is a very
small box.  Next, we go down in $T_X$ and up in $T_P$ simultaneously.
For each pair $(A,B)$ with $A$ at the $\ell$-th level of $T_X$ and $B$
at the $(L-\ell)$-th level of $T_P$, we construct expansion
coefficients for $\{u^B(x), x\in A\}$.  As shall see later, the key
point is that this is done by using the expansion coefficients which
have been already computed at the previous level.  Finally, we arrive
at level $\ell = L$, i.~e.~at the root node of $T_P$. There $u^B(x) =
u(x)$, and since one has available all the compact expansions
corresponding to all the leaf nodes $A$ of $T_X$, one holds an
approximation of the potential $u(x)$ for all $x\in X$.

%------------
\subsection{Applications}

The discrete equation \eqref{eq:dfio} naturally arises as a numerical
approximation of a continuous-time FIO of the general form
\begin{equation}
  \label{eq:cfio}
  u(x) = \int_{\R^2} a(x,k) e^{2\pi\i \Phi(x,k)} f(k) d k. 
\end{equation}
Note that in \eqref{eq:dfio}, the problem is simplified by setting the
amplitude $a(x,k)$ to 1. The reason for making this simpler is that in
most applications of interest, $a(x,k)$ is a much simpler object than
the term $e^{2\pi\i \Phi(x,k)}$. For instance, $a(x,k)$ often has a
low-rank separated approximation, which is valid in $\R^2 \times \R^2$
and yields a fast algorithm \cite{bao-1996-cpdo, candes-2007-fcfio}. Hence, setting
$a(x,k)=1$ retains the essential computational difficulty.

A significant instance of \eqref{eq:cfio} is the solution operator to
the wave equation
\[
u_{tt}(x,t) - c^2 \Delta u(x,t) = 0
\]
with constant coefficients and $x \in \R^2$. With initial conditions of the form $u(x,0)
= u_0(x)$ and $u_t(x,0) = 0$, say, the solution $u(x,t)$ at any time
$t > 0$ is given by
\[
u(x,t) = \frac{1}{2}
\left(
\int_{\R^2} e^{2\pi\i(x\cdot k + c|k|t)} \hat{u}_0(k) d k + 
\int_{\R^2} e^{2\pi\i(x\cdot k - c|k|t)} \hat{u}_0(k) d k
\right),
\]
where $\hat{u}_0$ is the Fourier transform of $u_0$.  Clearly, this is
the sum of two FIOs with phase functions $\Phi_{\pm}(x,k) = x\cdot k
\pm c|k|t$ and amplitudes $a_\pm (x,k) = 1/2$. Further, FIOs are still
solution operators even in the case of inhomogeneous coefficients
$c(x)$ as in
\[
u_{tt}(x,t) - c^2(x) \Delta u(x,t) = 0.
\]
Indeed, under very mild smoothness assumptions, the solution operator
remains the sum of two FIOs, at least for sufficiently small
times. The only difference is that the phases and amplitudes are a
little more complicated. In particular, the phase function is the
solution of a Hamilton-Jacobi equation which depends upon $c(x)$.

Another important example of FIO frequently arises in seismics.
A fundamental task in reflection seismology consists in
producing an image of the sharp features of an underground medium from
the seismograms generated by surface explosions. In a
nutshell, one builds an imaging operator which maps variations of the
pressure field at the surface into variations of the sound speed of
the medium (large variations indicate the presence of
reflectors). This imaging operator turns out to be an FIO
\cite{beylkin-1984-ipagrt,candes-2007-fcfio}.  Because FIOs are hard
to compute, several algorithms with various degrees of simplification have been proposed, 
most notably  {\em Kirchhoff migration} which approximates the imaging operator as a generalized Radon transform \cite{beylkin-1984-ipagrt, symes-1998-mfrs}. Computing this transform still has a
relatively high complexity, namely, of order $N^3$ in 2D. In contrast,
the algorithm proposed in this paper has an optimal $O(N^2 \log N)$
operation count, hence possibly offering a significant speedup.

%------------
\subsection{Related work}

Although FIOs play an important role in the analysis and computation
of linear hyperbolic problems, the literature on fast computations of
FIOs is surprisingly limited. The only work addressing
\eqref{eq:dfio} in this general form is the article
\cite{candes-2007-fcfio} by the authors of the current paper. The
operative feature in \cite{candes-2007-fcfio} is an angular
partitioning of the frequency domain into $\sqrt{N}$ wedges, each with
an opening angle equal to $2\pi/\sqrt{N}$. When restricting the input
to such a wedge, one can then factor the operator into a product of
two simpler operators. The first operator is provably approximately
low-rank (and lends itself to efficient computations) whereas the
second one is a nonuniform Fourier transform which can be computed
rapidly using the nonuniform fast Fourier transform (NFFT)
\cite{anderson-1996-rcdft,dutt-1993-fftnd,potts-2001-fftndt}.  The
resulting algorithm has an $O(N^{2.5}\log N)$ complexity.

In a different direction, there has been a great amount of research on
other types of oscillatory integral transforms. An important example
is the discrete $n$-body problem where one wants to evaluate sums of
the form 
\[
\sum_{1 \le j \le n} q_j K(|x-x_j|), \quad K(r) = e^{\i \omega r}/r
\]
in the high-frequency regime ($\omega$ is large).  Such problems
appear naturally when solving the Helmholtz equation by means of a
boundary integral formulation
\cite{colton-1998-iaest,colton-1983-iemst}.  A popular approach seeks
to compress the oscillatory integral operator by representing it in an
appropriate basis such as a local Fourier basis, or a basis extracted
from the wavelet packet dictionary
\cite{averbuch-2000-ecoi,bradie-1993-fnc,demanet-2008-sf,huybrechs-2006-twtmci}.
This representation sparsifies the operator, thus allowing fast
matrix-vector products. In spite of having good theoretical estimates,
this approach has thus far been practically limited to 1D
boundaries. One particular issue with this approach is that the
evaluation of the remaining nonnegligible coefficients sometimes
requires assembling the entire matrix, which can be computationally
rather expensive.

To the best of our knowledge, the most successful method for the
Helmholtz kernel $n$-body problem in both 2 and 3D is the
high-frequency fast multipole method (HF-FMM) proposed by Rokhlin and
his collaborators in a series of papers
\cite{rokhlin-1990-rsiest,rokhlin-1993-dfto,cheng-2006-awfmm}. This
approach combines the analytic property of the Helmholtz kernel with
an FFT-type fast algorithm to speedup the computation of the
interaction between well-separated regions. If $N^2$ is the number of
input and output points as before, the resulting algorithm has an
$O(N^2\log N)$ computational complexity. Other algorithms using
similar techniques can be found in
\cite{chew-2001-feace,darve-2000-fmmni,darve-2004-afmmmes,song-1995-mfma}.

Finally, the idea of butterfly computations has been applied to the
$n$-body problem in several ways. The original paper of Michielssen
and Boag \cite{michielssen-1996-mmda} used this technique to
accelerate the computation of the oscillatory interactions between
well-separated regions. More recently, Engquist and Ying
\cite{engquist-2007-fdmaok,engquist-2008-fdc} proposed a
multidirectional solution to this problem, where part of the algorithm
can be viewed as a butterfly computation between specially selected
spatial subdomain.

%------------
\subsection{Contents}

The rest of this paper is organized as follows. Section
\ref{sec:butterfly} describes the overall structure of the butterfly
algorithm. In Section \ref{sec:rank}, we prove the low-rank property
of the kernel and introduce an interpolation based method for
constructing low-rank separated approximations. Section \ref{sec:algo}
develops the algorithm by incorporating our low-rank approximations
into the butterfly structure. Numerical results are shown in Section
\ref{sec:results}. Finally, we discuss related problems for future
research in Section \ref{sec:concl}.

%----------------------------------------------------------
\section{The Butterfly Algorithm}
\label{sec:butterfly}

We begin by offering a general description of the butterfly structure
and then provide several concrete examples. This general structure was
originally introduced in \cite{michielssen-1996-mmda}, and later
generalized in \cite{oneil-2007-ncabft}.

In this section, $X$ and $P$ are two arbitrary point sets in $\R^d$,
both of cardinality $M$. We are given inputs $\{f(p), p \in P\}$ and
wish to compute the potentials $\{u(x), x\in X\}$ defined by
\[
u(x) = \sum_{p\in P} K(x,p) f(p), \quad x \in X, 
\]
where $K(x,p)$ is some kernel.  Let $D_X \supset X$ and $D_P \supset
P$ be two square domains containing $X$ and $P$ respectively. The main
data structure underlying the butterfly algorithm is a pair of dyadic
trees $T_X$ and $T_P$. The tree $T_X$ has $D_X$ as its root box and is
built by recursive, dyadic partitioning of $D_X$ until each leaf box
contains only a small number of points. The tree $T_P$ recursively
partitions $D_P$ in the same way. With the convention that the root
nodes are at level 0, one sees that under some uniformity condition
about the point distributions, the leaf nodes are at level $L = O(\log
M)$.  Throughout, $A$ and $B$ denote the square boxes of $T_X$ and
$T_P$, $\ell(A)$ and $\ell(B)$ denote their level.

The crucial property that makes the butterfly algorithm work is a
special low-rank property. Consider any pair of boxes $A \in T_X$ and
$B \in T_P$ obeying the condition $\ell(A) + \ell(B) = L$; we want the
submatrix $\{K(x,p), x \in A, p \in B\}$ (we will sometimes loosely
refer to this as the interaction between $A$ and $B$) to be
approximately of constant rank. More rigorously, for any $\eps$, there
must exist a constant $r_\eps$ independent of $M$ and two sets of
functions $\{\alpha^{AB}_t (x), 1 \le t \le r_\eps\}$ and
$\{\beta^{AB}_t (p), 1\le t \le r_\eps\}$ such that the following
approximation holds
\begin{equation}
  \left| K(x,p) - \sum_{t=1}^{r_\eps} \alpha^{AB}_t(x) \beta^{AB}_t(p) \right| \le \eps,
  \quad
  \forall x\in A,  \forall p\in B.
  \label{eq:glr}
\end{equation}
The number $r_\eps$ is called the {\em $\eps$-separation rank}. The
exact form of the functions $\{\alpha^{AB}_t (x) \}$ and
$\{\beta^{AB}_t (p)\}$ of course depends on the problem to which the
butterfly algorithm is applied, and we will give two examples at the
end of this section.

Recalling the definition $u^B(x) = \sum_{p\in B} K(x,p) f(p)$, the
low-rank property gives a compact expansion for $\{u^B(x), x\in A\}$
as summing \eqref{eq:glr} over $p\in B$ with weights $f(p)$ gives
\[
\left| u^B(x) - \sum_{t=1}^{r_\eps} \alpha^{AB}_t(x) \left( \sum_{p\in B} \beta^{AB}_t(p) f(p) \right) \right| \le \left( \sum_{p\in B} |f(p)| \right) \eps,
\quad \forall x \in A.
\]
Therefore, if we can find coefficients $\{\delta^{AB}_t\}_t$ obeying 
\begin{equation}
  \delta^{AB}_t \approx \sum_{p\in B} \beta^{AB}_t(p) f(p),
  \label{eq:delta}
\end{equation}
then the restricted potential $\{u^B(x), x\in A\}$ admits the compact
expansion
\[
\left| u^B(x) - \sum_{t=1}^{r_\eps} \alpha^{AB}_t(x) \delta^{AB}_t \right| \le \left( \sum_{p\in B} |f(p)| \right) \eps,
\quad \forall x\in A.
\]
We would like to emphasize that for each pair $(A,B)$, the number of
terms in the expansion is independent of $M$.

Computing $\{\delta^{AB}_t, 1\le t \le r_\eps\}$ by means of
\eqref{eq:delta} for all pairs $A, B$ is not efficient when $B$ is a
large box because for each $B$, there are many paired boxes $A$.  The
butterfly algorithm, however, comes with an efficient way for computing
$\{\delta^{AB}_t\}$ recursively.  The general structure of the
algorithm consists of a top down traversal of $T_X$ and a bottom up
traversal of $T_P$, carried out simultaneously. Postponing the issue
of computing the separated expansions, i.e.~$\{\alpha^{AB}_t(x)\}$ and
$\{\beta^{AB}_t(p)\}$, this is how the butterfly algorithm operates.
\begin{enumerate}
\item {\em Preliminaries}. Construct the trees $T_X$ and $T_P$ with
  root nodes $D_X$ and $D_P$.

\item {\em Initialization}. Let $A$ be the root of $T_X$. For each
  leaf box $B$ of $T_P$, construct the expansion coefficients $\{
  \delta^{AB}_t, 1\le t \le r_\eps\}$ for the potential $\{u^B(x),
  x\in A\}$ by simply setting
  \begin{equation}
    \delta^{AB}_t = \sum_{p\in B} \beta^{AB}_t(p) f(p).
  \label{eq:bf1}
  \end{equation}

\item {\em Recursion.} For $\ell = 1, 2, \ldots, L$, visit level
  $\ell$ in $T_X$ and level $L-\ell$ in $T_P$. For each pair $(A,B)$
  with $\ell(A) = \ell$ and $\ell(B) = L-\ell$, construct the expansion
  coefficients $\{\delta^{AB}_t, 1\le t \le r_\eps\}$ for the
  potential $\{u^B(x), x\in A\}$. This is done by using the low-rank
  representation constructed at the previous level ($\ell = 0$ is the
  initialization step). Let $A_p$ be $A$'s parent and $\{B_c\}$ be
  $B$'s children. At level $\ell-1$, the expansion coefficients
  $\{\delta^{A_p B_c}_{t'}\}_{t'}$ of $\{u^{B_c}(x), x\in A_p\}$ are
  readily available and we have
  \[
  \left| u^{B_c}(x) - \sum_{t'=1}^{r_\eps} \alpha^{A_pB_c}_{t'}(x) \delta^{A_pB_c}_{t'} \right| \le \left( \sum_{p\in B_c} |f(p)| \right) \eps,
  \quad \forall x\in A_p.
  \]
  Since $u^B(x) = \sum_c u^{B_c}(x)$, the previous inequality implies
  that
  \[
  \left| u^B(x) - \sum_c \sum_{t'=1}^{r_\eps} \alpha^{A_pB_c}_{t'}(x) \delta^{A_pB_c}_{t'} \right| \le \left( \sum_{p\in B} |f(p)| \right) \eps,
  \quad \forall x\in A_p.
  \]
  Since $A \subset A_p$, the above approximation is of course true for
  any $x \in A$. However, since $\ell(A) + \ell(B) = L$, the sequence of
  restricted potentials $\{u^B(x), x\in A\}$ also has a low-rank
  approximation of size $r_\eps$, namely,
  \[
  \left| u^B(x) - \sum_{t=1}^{r_\eps} \alpha^{AB}_t(x) \delta^{AB}_t \right| \le \left( \sum_{p\in B} |f(p)| \right) \eps,
  \quad \forall x\in A.
  \]
  Combining these last two approximations, we obtain that
  $\{\delta^{AB}_t\}_t$ should obey 
  \begin{equation}
    \sum_{t=1}^{r_\eps} \alpha^{AB}_t(x) \delta^{AB}_t \approx
    \sum_c \sum_{t'=1}^{r_\eps} \alpha^{A_pB_c}_{t'}(x) \delta^{A_pB_c}_{t'}, \quad \forall x\in A.
    \label{eq:bf2}
  \end{equation}
  Since this is an overdetermined linear system for
  $\{\delta^{AB}_t\}_t$ when $\{\delta^{A_pB_c}_{t'}\}_{t',c}$ are
  available, one possible approach to compute $\{\delta^{AB}_t\}_t$ is
  to solve a least-squares problem but this can be very costly when
  $|A|$ is large. Instead, the butterfly algorithm uses an approximate
  linear transformation mapping $\{\delta^{A_pB_c}_{t'}\}_{t',c}$ into
  $\{\delta^{AB}_t\}_t$, which can be computed efficiently.  We will
  discuss how this is done in several examples at the end of this
  section.

\item {\em Termination.} Now $\ell = L$ and set $B$ to be the root
  node of $T_P$. For each leaf box $A \in T_X$, use the constructed
  expansion coefficients $\{\delta^{AB}_t\}_t$ to evaluate $u(x)$ for
  each $x \in A$,
  \begin{equation}
    u(x) = \sum_{t=1}^{r_\eps} \alpha^{AB}_t (x) \delta^{AB}_t.
    \label{eq:bf3}
  \end{equation}
\end{enumerate}

A schematic illustration of the algorithm is provided in Figure
\ref{fig:algo}. We would like to emphasize that the strict balance
between the levels of the target boxes $A$ and source boxes $B$
maintained throughout the procedure is the key to obtaining accurate
low-rank separated approximations.

\begin{figure}[h!]
  \begin{center}
    \includegraphics[height=2in]{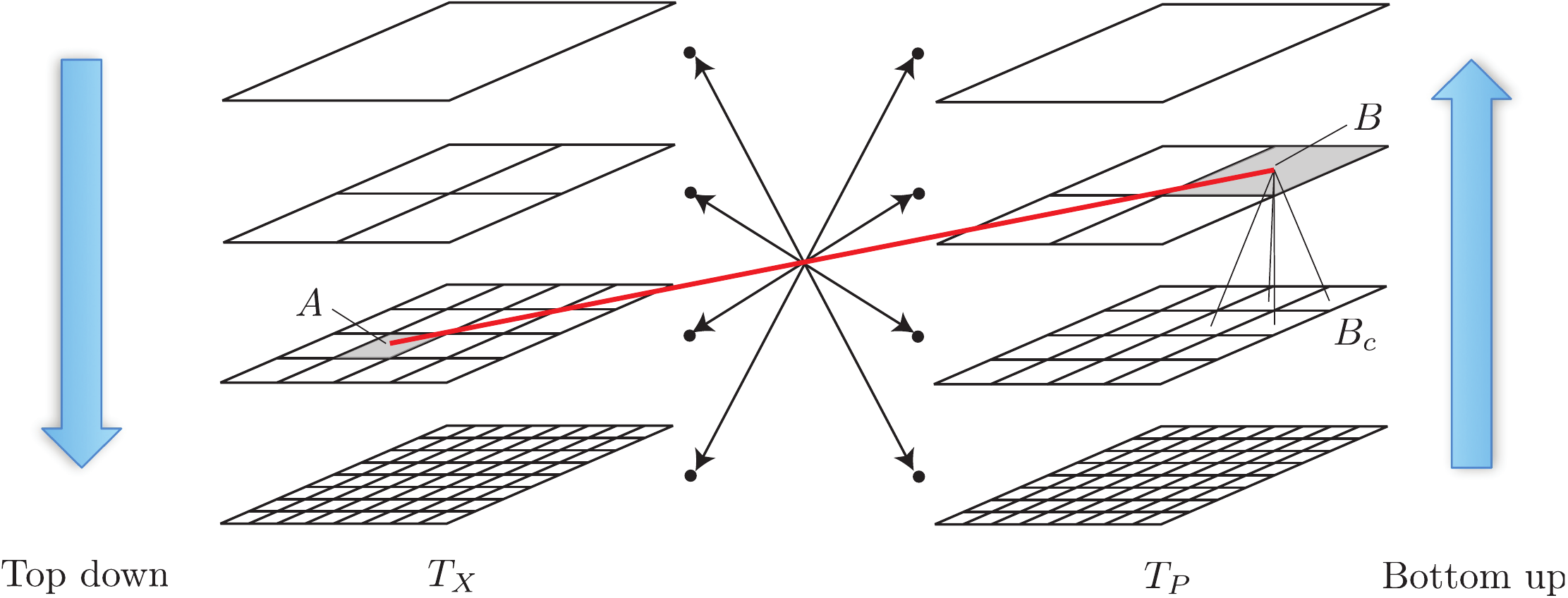}
  \end{center}
  \caption{Schematic illustration of the butterfly algorithm in 2D
    with 4 levels ($L = 3$).  The tree $T_X$ is on the left and $T_P$
    is on the right. The levels are paired as indicated so that the
    product of the sidelengths remains constant. The red line pairs
    two square boxes $A$ and $B$ at level 2 (shaded in gray); low-rank
    approximations of the localized kernel and expansion coefficients
    for the localized potential are computed for each such pair.  The
    algorithm starts at the root of $T_X$ and at the bottom of
    $T_P$. It then traverses $T_X$ top down and $T_P$ bottom up, and
    terminates when the last level (the bottom of $T_X$) is
    reached. The figure also represents the four children of any box
    $B$.  }
  \label{fig:algo}
\end{figure}

Leaving aside the computations of the separated expansion and taking
for granted that constructing $\{\delta^{AB}_t\}_t$ for each pair
$(A,B)$ has, in principle, the complexity of applying a linear
transform of size $O(r_\eps \times r_\eps)$, observe that the
butterfly algorithm has low computational complexity. To be sure, the
construction of $T_X$ and $T_P$ clearly takes at most $O(M\log M)$
operations. The initialization and termination steps take at most
$O(r_\eps \, M)$ as these steps require at most $O(r_\eps)$ operations
per point, see \eqref{eq:bf1} and \eqref{eq:bf3}.  The main workload
is of course in the recursion step. At each fixed level $\ell$, the
number of pairs $(A,B)$ under consideration is of order $O(M)$.  It
follows from our assumption that the number of flops required to
compute all the coefficients $\{\delta^{AB}_t\}_t$ at each level
$\ell$ is just $O(r_\eps^2 \, M)$.  Since there are only about $\log
M$ levels, the number of operations in the recursion is at most of the
order of $O(r_\eps^2 \, M\log M)$.  In conclusion, the overall
operation count is $O(r_\eps^2 \, M\log M)$.

The general structure of the butterfly algorithm should be clear by
now but we have left out two critical pieces, which we would need to
address to apply it to specific problems.
\begin{enumerate}
\item What are the functions $\{\alpha^{AB}_t(x)\}$ and
  $\{\beta^{AB}_t(p)\}$ in the low-rank approximation \eqref{eq:glr}
  and how are they computed?
\item How to solve for $\{\delta^{AB}_t\}_t$ from \eqref{eq:bf2}?
\end{enumerate}
The rest of this section discusses answers in two distinct examples.

\paragraph{Example 1.} In \cite{oneil-2007-ncabft}, O'Neil and Rokhlin
apply the butterfly algorithm to several special function transforms
in one dimension. Suppose that $N$ is a positive integer. In this
setup, $D_X = D_P = [0,N]$, $X$ and $P$ are two sets of $M = O(N)$
points distributed uniformly or quasi-uniformly in $[0,N]$, and the
kernel $K(x,p)$ parametrizes some special functions. For example, in
the case of the Fourier transform, $K(x,p) = e^{2\pi\i x p/N}$ so that
$p$ parametrizes a set of complex sinusoids. The trees $T_X$ and $T_P$
are recursive dyadic partitions of $[0,N]$ until the leaf nodes are of
unit size. In this work, all the kernels under study have low-rank
approximations when restricted to any pair $A\in T_X$ and $B\in T_P$
obeying $\ell(A) + \ell(B) = L = \log_2 N$.

The main tool for constructing the low-rank approximation is the
interpolative decomposition proposed in
\cite{gu-1996-eacsrqf,cheng-2005-clrm}. Given an $m\times n$ matrix
$Z$ which is approximately of rank $r$, the interpolative
decomposition constructs an approximate factorization $Z \approx Z_C
R$, where the matrix $Z_C$ consists of a subset of $r$ columns taken
from the original matrix $Z$ and the entries of $R$ have values close
to one.  Such a decomposition requires $O(m n^2)$ operations while
storing the matrix $R$ requires $O(r n)$ memory space.  Applying this
strategy to the kernel $K(x,p)$ with $x\in A$ and $p\in B$ implies
that the functions $\{ \alpha^{AB}_t(x), 1 \le t \le r\}$ are of the
form $\{K(x,p^{AB}_t), 1 \le t \le r \}$ with $\{p^{AB}_t\} \subset B$
and the functions $\{\beta^{AB}_t(p), 1 \le t \le r\}$ are given by
the corresponding entries in the matrix $R$.  Due to the special form
of $\{\alpha^{AB}_t(x)\}$, the coefficients $\{\delta^{AB}_t \}_t$ are
often called {\em equivalent sources}.

Now that we have addressed the computations of $\{\alpha^{AB}_t(x)\}$
and $\{\beta^{AB}_t(p)\}$, it remains to examine how to evaluate the
coefficients $\{\delta^{AB}_t\}$.  In the butterfly algorithm, these
coefficients are computed in the initialization step \eqref{eq:bf1}
and in the recursion step \eqref{eq:bf2}. Initially, $A$ is the root
box of $T_X$ and $B$ is a leaf box of $T_P$. To compute
$\{\delta^{AB}_t, 1 \le t \le r\}$ in the initialization step,
construct the interpolative decomposition for $K(x,p)$ with $x\in A$
and $p\in B$ obeying
\begin{equation}
  \left| K(x,p) - \sum_{t=1}^{r_\eps} K(x,p^{AB}_t) \beta^{AB}_t(p) \right| \le \eps, \quad \forall x\in A, \forall p\in B.
  \label{eq:pre1}
\end{equation}
Since each leaf box $B$ contains only a constant number of points $p$,
constructing the interpolative decomposition requires $O(N)$
operations and $O(r_\eps)$ memory space for each $B$. Since there at
most $O(N)$ of these boxes, the computational costs scales at most
like $O(N^2)$. Then we simply compute $\{\delta^{AB}_t, 1 \le t \le
r\}$ via \eqref{eq:bf1}. Once the interpolative decomposition is
available, this requires $O(r_\eps N)$ operation for all pairs at the
$0$th level.

% EJC: I know that we need to build a case for the interpolation
% method but this is a lot of details for something that we will
% actually not use. We have to think about whether everything is
% needed here. What I don't want is to confuse the reader who has to
% digest so much before getting to our stuff. We should avoid
% unecessary distractions.
%
% I suggest we keep it as is at the moment knowing that we may want to
% shorten it later on (e.g. during the revision).

As for \eqref{eq:bf2}, the special form of the functions
$\{\alpha^{AB}_t(x), 1 \le t \le r_\eps\}$ allows rewriting the
right-hand side as
\[
u^B(x) \approx \sum_c \sum_{t'=1}^{r_\eps} K(x,p^{A_pB_c}_{t'})
\delta^{A_pB_c}_{t'}.
\]
As a result, we can treat this quantity as the potential generated by the
equivalent sources $\{\delta^{A_pB_c}_{t'}\}_{c,t'}$ located at
$\{p^{A_pB_c}_{t'}\}_{c,t'}$. In order to find $\{\delta^{AB}_t, 1 \le
t \le r_\eps\}$, construct the interpolative decomposition of $K(x,p)$
with $x\in A$ and $p \in \{p^{A_pB_c}_{t'}\}_{c,t'}$, namely,
\begin{equation}
  \left| K(x,p) - \sum_{t=1}^{r_\eps} K(x,p^{AB}_t) \beta^{AB}_t(p) \right| \le \eps,
  \quad \forall x\in A, \forall p\in \{p^{A_pB_c}_{t'}\}_{c,t'}.
  \label{eq:pre2}
\end{equation}
Since the numbers of points in $\{p^{A_p B_c}_{t'}\}_{c,t'}$ is
proportional to $r_\eps$, this construction requires $O(r_\eps^2 \,
|A|)$ and requires $O(r_\eps^2)$ memory space per pair $(A,
B)$. Summing \eqref{eq:pre2} over $p\in \{p^{A_pB_c}_{t'}\}_{c,t'}$
with weights $\{\delta^{A_pB_c}_{t'}\}_{c,t'}$ gives a way to compute
$\{\delta^{AB}_t\}_t$. Indeed, one can set
\[
\delta^{AB}_t = \sum_c \sum_{t'} \beta^{AB}_t(p^{A_pB_c}_{t'}) \delta^{A_pB_c}_{t'},\quad
1\le t \le r_\eps.
\]
% EJC: Again, we would have to worry about the accumulation of
% errors. I.e., in 2D, \eps becomes 4 \eps then 16 \eps and so on. I
% will address this later. 

% LY: Please see the algorithm description in Section 2. The error is
% always bounded by $(\sum_{p\in B} |f(p)|)$. The absolute error does
% increase along with the size of $B$.

From the above discussion, we see that the butterfly algorithm
described in \cite{oneil-2007-ncabft} requires a precomputation step
to generate interpolative decompositions for
\begin{itemize}
\item $K(x,p)$ for $x\in A$ where $A$ is the root node and $p\in B$
  for each leaf node \eqref{eq:pre1},  
\item and $K(x,p)$ for $x\in A$ and $p\in \{p^{A_pB_c}_{t'}\}_{c,t'}$
  for each pair $(A,B)$ with $\ell(A) = 1, 2, \ldots, L$ and $\ell(A) +
  \ell(B) = L$ \eqref{eq:pre2}.
\end{itemize}
A simple analysis shows that these ``precomputations'' take $O(r_\eps^2
\, N^2)$ operations and require $O(r_\eps^2 \, N \log N)$ memory
space.  The quadratic time is very costly for problems with large
$N$. This might be acceptable if the same Fourier integral operator
were applied a large number of times. However, in the situation where
the operator is applied only a few times, the quadratic precomputation
step becomes a huge overhead, and the computational time may even
exceed that of the direct evaluation method. Moreover, the storage
requirement quickly becomes a bottleneck even for problems of moderate
sizes as in practice, the constant $r_\eps^2$ is often nonnegligible.

\paragraph{Example 2.} In \cite{ying-2008-sftba}, the butterfly
algorithm is used to develop a fast algorithm for sparse Fourier
transforms with both spatial and Fourier data supported on curves.
Suppose $N$ is a positive integer. In this setting, $D_X = D_P =
[0,N]^2$, and $X$ and $P$ are two set of $M = O(N)$ points supported
on smooth curves in $[0,N]^2$. The kernel is given by $K(x,p) =
e^{2\pi\i x\cdot p /N}$.  The quadtrees $T_X$ and $T_P$ are generated
adaptively in order to prune branches which do not intersect with the
support curves.  The leaf boxes are of unit size and $L=\log_2 N$. For
any pair of boxes $A\in T_X$ and $B\in T_P$ with $\ell(A) + \ell(B) =
L$, it is shown that the restricted kernel $K(x,p)$ is approximately
low-rank. Here, the functions $\{\alpha^{AB}_t(x), 1 \le t \le r\}$
take the form $\{ K(x,p^B_t), 1 \le t \le r\}$ where $\{p^B_t\}$ is a
tensor-product Chebyshev grid located inside the box $B$. The
coefficients $\{\delta^{AB}_t\}$---also called equivalent
sources---are constructed by collocating \eqref{eq:bf2} on a
tensor-product Chebyshev grid inside the box $A$.

Due to the tensor-product structure of the grid $\{p^B_t, 1 \le t \le
r\}$ and the special form of the kernel $K(x,p) = e^{2\pi\i x\cdot p
  /N}$, one can compute $\{\delta^{AB}_t\}$ via a linear
transformation which is essentially independent of the boxes $A$ and
$B$. As a result, one does not need the quadratic-time precomputation
step and there is no need to store explicitly these linear
transformations. We refer to \cite{ying-2008-sftba} for more details.

As we shall see, the algorithm introduced in this paper also makes use
of tensor-product Chebyshev grids, but the low-rank approximation is
constructed through interpolation rather than through
collocation. Before discussing other similarities and differences,
however, we first need to introduce our algorithm.

%----------------------------------------------------------
\section{Low-rank Approximations}
\label{sec:rank}

Recall that our problem is to compute 
\[
u(x) = \sum_{p\in P} e^{2\pi\i N \Psi(x,p)} f(p), \quad \Psi(x,p) =
\frac{\sqrt{2}}{2} \Phi(x,e^{2\pi\i p_2}) p_1,
\]
for all $x \in X$, where $X$ and $P$ are the point sets given in
Figure \ref{fig:ptree}(a) and (b). As both $X$ and $P$ are contained
in $[0,1]^2$, we set $D_X = [0,1]^2$ and likewise for $D_P$. Then the
two quadtrees $T_X$ and $T_P$ recursively partition the domains $D_X$
and $D_P$ uniformly until the finest boxes are of sidelength $1/N$.
%see Figure \ref{fig:ptree}.

%----------------------------
\subsection{The low-rank property}
\label{subsec:lrp}

We assume that the function $\Psi(x,p)$ is a real-analytic function in
the joint variables $x$ and $p$. This condition implies the existence
of two constants $Q$ and $R$ such that
\[
\sup_{x,p \, \in \, [0,1]^2} \, \left| \p^i_x \p^j_p \Psi(x,p) \right|
\le Q \, i!  j! \, R^{-|i|-|j|}, 
\]
where $i=(i_1,i_2)$ and $j=(j_1,j_2)$ are multi-indices, $i! = i_1!$
and $|i|= i_1 + i_2$.  For instance, the constant $R$ can be set as
any number smaller than the uniform convergent radius of the power
series of $\Psi$.  Following \cite{candes-2007-fcfio}, we term these
functions {\em $(Q,R)$-analytic}.

The theorem below states that for each pair of boxes $(A,B) \in T_X
\times T_P$ obeying $w(A)\,w(B) = 1/N$, the submatrix $\{e^{2\pi\i N
  R^{AB}(x,p)}, x \in A, p \in B\}$ is approximately
low-rank. Throughout, the notation $f \lesssim g$ means $f \le C g$
for some numerical constant $C$ independent of $N$ and $\eps$.
\begin{theorem}
  \label{thm:main}
  Let $A$ and $B$ be boxes in $T_X$ and $T_P$ obeying $w(A)\,w(B) = 1/N$.
  For any $\eps \le \eps_0$ and $N \ge N_0$, where $\eps_0$ and $N_0$
  are some constants, there exists an approximation obeying
  \[
  \left|
    e^{2\pi\i N R^{AB}(x,p)} - \sum_{t=1}^{r_\eps} \alpha^{AB}_t(x) \beta^{AB}_t(p)
  \right|
  \le \eps
  \]
  with $r_\eps \lesssim \log^4 (1/\eps)$. Moreover,
  \begin{itemize}
  \item when $w(B) \le 1/\sqrt{N}$, the functions
    $\{\beta^{AB}_t(p)\}_t$ can all be chosen as monomials in
    $(p-p_0(B))$ with a degree not exceeding a constant times $\log^2
    (1/\eps)$,
  \item and when $w(A) \le 1/\sqrt{N}$, the functions
    $\{\alpha^{AB}_t(x)\}_t$ can all be chosen as monomials in
    $(x-x_0(A))$ with a degree not exceeding a constant times $\log^2
    (1/\eps)$.
  \end{itemize}
\end{theorem}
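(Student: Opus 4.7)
The approach is to build a separated approximation of $e^{2\pi\i N R^{AB}(x,p)}$ as a polynomial in the variable lying in the smaller box, by combining two successive Taylor expansions. By the perfect symmetry of the definition \eqref{eq:RAB} under the exchange $(x,A)\leftrightarrow (p,B)$, it suffices to treat the case $w(B)\le 1/\sqrt{N}$ and to approximate in $p$; the case $w(A)\le 1/\sqrt{N}$ is then handled by interchanging the roles of $x$ and $p$ throughout.

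The first step is to replace $R^{AB}(x,p)$ by its Taylor polynomial $R^{AB}_q(x,p)$ of total degree $q$ in $(p-p_0(B))$. The key observation is that, by the very construction \eqref{eq:RAB}, the functions $R^{AB}(x_0(A),p)$ and $R^{AB}(x,p_0(B))$ vanish identically; in particular the Taylor coefficient $\p_p^j R^{AB}(x,p_0(B))/j!$ equals $[\p_p^j\Psi(x,p_0(B))-\p_p^j\Psi(x_0(A),p_0(B))]/j!$. Applying the mean value theorem in $x$ together with the $(Q,R)$-analyticity estimate on $\p_x\p_p^j\Psi$ yields $|\p_p^j R^{AB}(x,p_0(B))/j!|\lesssim w(A)\,R^{-|j|}$. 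Summing the geometric tail of the $p$-Taylor series and using the hypothesis $Nw(A)w(B)=1$, one gets $N\,|R^{AB}(x,p)-R^{AB}_q(x,p)|\lesssim (q+1)(w(B)/R)^q/R$ uniformly on $A\times B$. Choosing $q$ a constant multiple of $\log(1/\eps)/\log N$ makes this bound smaller than $\eps/(4\pi)$, so that $|e^{2\pi\i N R^{AB}}-e^{2\pi\i N R^{AB}_q}|\le \eps/2$.

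The second step is to expand the exponential of the resulting polynomial as $e^{2\pi\i N R^{AB}_q(x,p)}=\sum_{m\ge 0}(2\pi\i N R^{AB}_q(x,p))^m/m!$. The estimate \eqref{eq:R} already shows that $|NR^{AB}|\lesssim 1$ on $A\times B$, and hence $|NR^{AB}_q|\lesssim 1$ as well. Truncating the series at $m\le M$ introduces an error of order $C^M/M!$, which is at most $\eps/2$ as soon as $M=O(\log(1/\eps))$. The truncated sum is then a polynomial in $(p-p_0(B))$ whose total degree does not exceed $Mq\lesssim \log^2(1/\eps)/\log N\le \log^2(1/\eps)$, matching the degree bound claimed by the theorem. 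In two dimensions the number of monomials of total degree at most $Mq$ is $O((Mq)^2)$, which gives $r_\eps\lesssim \log^4(1/\eps)$ and supplies the functions $\beta^{AB}_t(p)$ as monomials in $(p-p_0(B))$; the remaining $x$-dependence is absorbed into $\alpha^{AB}_t(x)$.

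The main obstacle is the delicate interaction between the two truncations. Performing only one of them does not suffice: expanding $R^{AB}$ alone still leaves a non-polynomial exponential in $p$, while expanding only the outer exponential produces powers of $NR^{AB}$ that are not polynomial in $p$ either. The cancellation property of $R^{AB}$ at $x_0(A)$ and at $p_0(B)$ is precisely what yields the extra factor $w(A)$ in the first step, so that the truncation error is amplified by $N$ only as $Nw(A)\cdot w(B)/R=1/R$; this cancellation must be combined carefully with the bound $|NR^{AB}|\lesssim 1$ from \eqref{eq:R} in order for the exponential series of the second step to converge rapidly. Tracking this interplay and balancing the two parameters $q$ and $M$ is the delicate point of the proof.
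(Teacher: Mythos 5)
Your proof is correct and follows essentially the same route as the paper: exploit the vanishing of $R^{AB}$ at $x_0(A)$ and at $p_0(B)$ plus the constraint $w(A)w(B)=1/N$ to Taylor-expand $R^{AB}$ in the variable of the smaller box with a remainder that stays $O(\eps)$ after multiplication by $N$, then truncate the power series of the exponential at $O(\log(1/\eps))$ terms and count the resulting monomials in $(p-p_0(B))$ (resp.\ $(x-x_0(A))$) to get $r_\eps\lesssim\log^4(1/\eps)$. The only cosmetic difference is that the paper fixes the Taylor order $K=C_0\log(1/\eps)$ uniformly in $N$ via the bound $w(B)/R\le 1/2$, whereas you let $q\sim\log(1/\eps)/\log N$ depend on $N$; both choices give the theorem's degree and rank bounds.
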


The proof of Theorem \ref{thm:main} uses the following elementary
lemma (see \cite{candes-2007-fcfio} for a proof).
\begin{lemma}
  For each $z_0>0$ and $\eps>0$, set $s_\eps = \lceil \max(2e z_0,
  \log_2 (1/\eps)) \rceil$. Then 
  \[
  \left| e^{\i z} - \sum_{t=0}^{s_\eps-1} \frac{(\i z)^t}{t!} \right|
  \le \eps, \quad \forall |z| \le z_0. 
  \]
  \label{lem:exp}
\end{lemma}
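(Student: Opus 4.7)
The plan is to prove Lemma \ref{lem:exp} by the standard Taylor‐remainder argument, using the two conditions baked into $s_\eps$: the bound $s_\eps \ge 2ez_0$ forces the Taylor tail to behave geometrically, and the bound $s_\eps \ge \log_2(1/\eps)$ then converts the geometric decay into the desired $\eps$‐level bound.

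First, I would write the difference explicitly as the Taylor tail,
\[
e^{\i z} - \sum_{t=0}^{s_\eps-1} \frac{(\i z)^t}{t!} = \sum_{t \ge s_\eps} \frac{(\i z)^t}{t!},
\]
so that
\[
\left| e^{\i z} - \sum_{t=0}^{s_\eps-1} \frac{(\i z)^t}{t!} \right| \le \sum_{t \ge s_\eps} \frac{|z|^t}{t!}.
\]
The job is to show that this tail is at most $\eps$ whenever $|z| \le z_0$.

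Next, I would bound the leading term by Stirling. Using $t! \ge \sqrt{2\pi t}\,(t/e)^t$,
\[
\frac{|z|^{s_\eps}}{s_\eps!} \;\le\; \frac{1}{\sqrt{2\pi s_\eps}} \left( \frac{e|z|}{s_\eps} \right)^{s_\eps}.
\]
The hypothesis $s_\eps \ge 2ez_0 \ge 2e|z|$ gives $e|z|/s_\eps \le 1/2$, hence the leading term is at most $(1/2)^{s_\eps}/\sqrt{2\pi s_\eps}$. For the remaining terms, consecutive ratios satisfy
\[
\frac{|z|^{t+1}/(t+1)!}{|z|^{t}/t!} \;=\; \frac{|z|}{t+1} \;\le\; \frac{|z|}{s_\eps+1} \;\le\; \frac{1}{2e},
\]
so the tail is dominated by a geometric series of ratio $1/(2e)$:
\[
\sum_{t\ge s_\eps} \frac{|z|^t}{t!} \;\le\; \frac{|z|^{s_\eps}}{s_\eps!} \cdot \frac{1}{1 - 1/(2e)} \;\le\; \frac{1}{\sqrt{2\pi s_\eps}} \cdot \frac{2e}{2e-1} \cdot \Bigl(\tfrac{1}{2}\Bigr)^{s_\eps}.
\]
A quick numerical check shows that the prefactor $\frac{1}{\sqrt{2\pi s_\eps}}\cdot\frac{2e}{2e-1}$ is bounded by $1$ for all $s_\eps \ge 1$, so the tail is at most $(1/2)^{s_\eps}$.

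Finally, I invoke the second condition in the definition of $s_\eps$: since $s_\eps \ge \log_2(1/\eps)$, we have $(1/2)^{s_\eps} \le \eps$, which yields the claim. I do not anticipate any real obstacle here; the only delicate point is tightening the constant in front of $(1/2)^{s_\eps}$ so that the bound comes out as $\eps$ rather than a small multiple of $\eps$, which is handled cleanly by the Stirling prefactor $1/\sqrt{2\pi s_\eps}$ absorbing the geometric‐series factor $2e/(2e-1)$. The ceiling in $s_\eps = \lceil \max(2ez_0,\log_2(1/\eps))\rceil$ supplies any additional slack needed for small values of $s_\eps$.
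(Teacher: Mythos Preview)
Your argument is correct: the Taylor tail bound via Stirling, the geometric domination of successive terms by $1/(2e)$, and the absorption of the constant $\frac{2e}{2e-1}$ by the Stirling prefactor $1/\sqrt{2\pi s_\eps}$ all check out, and the final step $(1/2)^{s_\eps}\le\eps$ follows directly from $s_\eps\ge\log_2(1/\eps)$. The paper does not actually prove this lemma; it simply cites \cite{candes-2007-fcfio} for the proof, so there is nothing to compare against --- your write-up supplies the omitted details and matches the standard argument one would expect behind that citation.
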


\begin{proof}[Proof of Theorem \ref{thm:main}] Below, we will drop the
  dependence on $A$ and $B$ in $x_0(A)$ and $p_0(B)$ for $A$ and $B$
  are fixed boxes.  Since $w(A)\,w(B) = 1/N$, we either have $w(A) \le
  1/\sqrt{N}$ or $w(B) \le 1/\sqrt{N}$ or both. Suppose for instance
  that $w(B) \le 1 /\sqrt{N}$. Then
  \begin{align*}
    R^{AB}(x,p) & = \Psi(x,p) - \Psi(x_0,p) - \Psi(x,p_0) + \Psi(x_0,p_0)\\
    &  = \left[\Psi(x,p)-\Psi(x_0,p)\right] - \left[\Psi(x,p_0)-\Psi(x_0,p_0)\right]\\
    & = H_x(p) - H_x(p_0),
\end{align*}
where $H_x(p) := \Psi(x,p)-\Psi(x_0,p)$; the subscript indicates
that we see $H$ as a function of $p$ and think of $x$ as a parameter.
The function $R^{AB}(x,p)$ inherits the analyticity from $\Psi(x,p)$,
and its truncated Taylor expansion may be written as
\begin{equation}
    R^{AB}(x,p) =
    \sum_{1\le|i|<K} \frac{ \p^i_p H_x(p_0) }{i!} (p-p_0)^i
    + \sum_{|i|=K}   \frac{ \p^i_p H_x(p^*) }{i!} (p-p_0)^i, 
    \label{eq:RABexp}
\end{equation}
where $p^*$ is a point in the segment $[p_0, p]$. For each $i$ with
$|i| = K$, we have 
\[
\p^i_p H_x(p^*) = \sum_{|j|=1}   \p^j_x \p^i_p
    \Psi(x^*,p^*)(x-x_0)^j,
\]
for some point $x^*$ in $[x_0, x]$ and, therefore, it follows from the
$(Q,R)$-analycity property that
\[
    \left|   \frac{ \p^i_p H_x(p^*)}{i!} (p-p_0)^i \right| 
     \le  2 \, Q R^{-(K+1)} w(A) \,  (w(B))^K \le 
     2 Q R^{-2} \frac{1}{N} \left( \frac{w(B)}{R} \right)^{(K-1)}. 
\]
Since $w(B) \le 1/\sqrt{N}$, $1/\sqrt{N} \le R/2 \Rightarrow w(B)/R \le
1/2$ and, therefore, for $N$ sufficiently large, 
\[
\left| \frac{ \p^i_p H_x(p^*)}{i!} (p-p_0)^i \right| \le 2^{-(K-2)} \,
\frac{Q R^{-2}}{N}.
\]
Because there are at most $K+1$ terms with $|i| = K$, it follows that
\begin{equation}
  2\pi N \left|R^{AB}(x,p) - \sum_{1\le|i|<K} \frac{\p^i_p
      H_x(p_0)}{i!} (p-p_0)^i \right| \le \pi (K+1) \, Q
  R^{-2} \, 2^{-(K-3)}. 
  \label{eq:piK}
\end{equation}
Set
\begin{equation}
  K= C_0 \log(1/\eps). 
  \label{eq:Kval}
\end{equation}
Then if $C_0$ is a sufficiently large numerical constant, the right-hand
side of \eqref{eq:piK} is smaller than $\epsilon$ and, therefore, 
\begin{equation}
  2\pi N  \left| 
    R^{AB}(x,p) - A(x,p) \right| \le \eps, \qquad A(x,p) := 
  \sum_{1\le|i|<K} \frac{\p^i_p H_x(p_0)}{i!} (p-p_0)^i. 
  \label{eq:thmR}
\end{equation}

Noting that $\left| e^{\i a}-e^{\i b} \right| \le |a-b|$, we see that
in order to obtain an $\eps$-accurate separated approximation for
$e^{2\pi\i N R^{AB}(x,p)}$, we only need to construct one for
$e^{2\pi\i N A(x,p)}$.  Our plan is to invoke Lemma \ref{lem:exp}. To
do this, we need an estimate on $A(x,p)$.  When $K=1$, the estimate in
\eqref{eq:piK} provides a bound of $R^{AB}(x,p)$
\[
2\pi N | R^{AB}(x,p) | \le 8 \pi Q R^{-2}.
\]
Combining this estimate with \eqref{eq:thmR} yields 
\[
2\pi N |A(x,p)| \le 2\pi N \left| R^{AB}(x,p) \right| + \eps \le
8\pi Q R^{-2} + \eps.
\]
By taking $\eps$ small enough, we can assume
\[
2e \left( 8 \pi Q R^{-2} + \eps \right) \le \log_2(1/\eps), 
\]
and Lemma \ref{lem:exp} gives a $\log(1/\eps)$-term $\eps$-accurate
approximation
\[
\left| e^{2\pi\i N A(x,p)} - 
  \sum_{t=0}^{\log(1/\eps)-1} \frac{(2\pi N A(x,p))^t}{t!}
\right| \le \eps.
  \]
  Expanding $(2\pi\i A(x,p))^t$ for each $t$ gives a sum in which each
  term is a function of $x$ times a monomial $(p-p_0)^k$ of degree
  $|k| \lesssim \log^2(1/\eps)$.  Since there are at most
  $O(\log^4(1/\eps))$ different choices for the multi-index $k$ in the
  expanded formula, combining the terms with the same multi-index $k$
  yields an $O( \log^4(1/\eps) )$-term $2\eps$-accurate separated
  approximation for $e^{2\pi\i N R^{AB}(x,p)}$ with factors
  $\{\beta^{AB}_t(p)\}$ of the form $(p-p_0)^k$ as claimed. 
  
  We studied the case $w(B)\le 1/\sqrt{N}$ but the method is identical
  when $w(A) \le 1/\sqrt{N}$. Write
  \[
  R^{AB}(x,p) =\left[\Psi(x,p)-\Psi(x,p_0)\right] - \left[\Psi(x_0,p)-\Psi(x_0,p_0)\right]
  \]
  and follow the same procedure. The resulting approximation has
  $O(\log^4(1/\eps))$ terms, but the factors $\{\alpha^{AB}_t(x)\}$
  are now of the form $(x-x_0)^k$ with $|k| \lesssim \log^2(1/\eps)$.
\end{proof}

Theorem \ref{thm:main} shows that the $\eps$-rank of $e^{2\pi\i N
  R^{AB}(x,p)}$ is bounded by a constant multiple of $\log^4 (1/\eps)$
for a prescribed accuracy $\eps$. Since 
\[
\Psi(x,p) = \Psi(x,p_0) + \Psi(x_0,p) - \Psi(x_0,p_0) + R^{AB}(x,p),
\]
a direct consequence is that $\{e^{2\pi\i N \Psi(x,p)}, x \in A, p \in
B\}$ has a separated approximation of the same rank. A possible
approach to compute these approximations would be to use the
interpolative decomposition described in Example 1 of Section
\ref{sec:butterfly}. However, this method suffers from two main
drawbacks discussed in that section limiting its applicability to
relatively small problems. This is the reason why we propose below a
different and faster low-rank approximation method.

%----------------------------
\subsection{Interpolation gives good low-rank approximations}
\label{subsec:interp}

The proof of Theorem \ref{thm:main} shows that when $w(B) \le
1/\sqrt{N}$, the $p$-dependent factors in the low-rank approximation
of $e^{2\pi\i N R^{AB}(x,p)}$ are all monomials in $p$. Similarly,
when $w(A) \le 1/\sqrt{N}$, the $x$-dependent factors are monomials in
$x$.  This suggests that an alternative to obtain a low-rank separated
approximation is to use polynomial interpolation in $x$ when $w(A) \le
1/\sqrt{N}$, and in $p$ when $w(B) \le 1/\sqrt{N}$.

For a fixed integer $q$, the Chebyshev grid of order $q$ on
$[-1/2,1/2]$ is defined by
\[
\left\{ z_i = \frac{1}{2} \cos \left( \frac{i\pi}{q-1} \right) \right\}_{0\le i\le q-1}.
\]
We use this to define tensor-product grids adapted to an arbitrary
squared box with center $c$ and sidelength $w$ as   
\[
\{c + w (z_{i_1}, z_{i_2}), i_1,i_2=0,1,\ldots, q-1\}.
\]
Given a set of grid points $\{z_i \in \R, 0 \le i \le q-1\}$, we
will also consider the family of Lagrange interpolation polynomials
$L_i$ taking value 1 at $z_i$ and 0 at the other grid points
\[
L_i(z; \{z_i\}) = \prod_{0\le j \le q-1, j\not= i}\frac{z - z_j}{z_i - z_j}.
\]
For tensor-product grids $\{z_{1,i_1}\} \times \{z_{2,{i_2}}\}$, we define
the 2D interpolation polynomials as
\[
L_{i}(z, \{z_i\}) = 
L_{i_1}(z_1, \{z_{1,i_1}\}) \, L_{i_2}(z_2, \{z_{2,i_2}\}), \quad i = (i_1, i_2).  
\]

The theorem below shows that Lagrange interpolation provides efficient
low-rank approximations. In what follows, $L_t^B$ is the 2D Lagrange
interpolation polynomial on the Chebyshev grid adapted to the box $B$.

\begin{theorem}
  \label{thm:intp}
  Let $A$ and $B$ be as in Theorem \ref{thm:main}. Then for any $\eps
  \le \eps_0$ and $ N \ge N_0$ where $\eps_0$ and $N_0$ are the
  constants in Theorem \ref{thm:main}, there exists $q_\eps \lesssim
  \log^2 (1/\eps)$ such that
  \begin{itemize}
  \item when $w(B) \le 1/\sqrt{N}$, the Lagrange interpolation of
    $e^{2\pi\i N R^{AB}(x,p)}$ in $p$ on a $q_\eps \times q_\eps$
    Chebyshev grid $\{p^B_t\}$ adapted to $B$ obeys
    \begin{equation}
      \left| e^{2\pi\i N R^{AB}(x,p)} - \sum_t e^{2\pi\i N R^{AB}(x,p^B_t)} \, L^B_t(p) \right|
      \le \eps, \quad \forall x\in A, \forall p\in B,
      \label{eq:intp1}
    \end{equation}
  \item and when $w(A) \le 1/\sqrt{N}$, the Lagrange interpolation of
    $e^{2\pi\i N R^{AB}(x,p)}$ in $x$ on a $q_\eps \times q_\eps$
    Chebyshev grid $\{x^A_t\}$ adapted to $A$ obeys
    \begin{equation}
      \left| e^{2\pi\i N R^{AB}(x,p)} - \sum_t L^A_t(x) \, e^{2\pi\i N R^{AB}(x^A_t,p)} \right| 
      \le \eps, \quad \forall x\in A, \forall p\in B.
      \label{eq:intp2}
    \end{equation}
  \end{itemize}
  Both \eqref{eq:intp1} and \eqref{eq:intp2} provide a low-rank
  approximation with $r_\eps = q_\eps^2 \lesssim \log^4(1/\eps)$
  terms.
\end{theorem}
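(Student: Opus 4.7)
The plan is to leverage Theorem \ref{thm:main} together with the classical quasi-optimality of tensor-product Chebyshev interpolation (quantified via the Lebesgue constant). Suppose first that $w(B) \le 1/\sqrt{N}$. Invoking Theorem \ref{thm:main} at an accuracy $\eps'$ (to be specified below) produces a separated approximation of $e^{2\pi\i N R^{AB}(x,p)}$ whose $p$-dependent factors are monomials in $(p-p_0(B))$ of degree $\lesssim \log^2(1/\eps')$. Collecting these terms, for each fixed $x \in A$ the exponential kernel is approximated, to within $\eps'$, by a bivariate polynomial $P_x(p)$ in $p=(p_1,p_2)$ whose degree in each coordinate is bounded by some $d_\eps \lesssim \log^2(1/\eps')$.

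Next, set $q_\eps = d_\eps + 1$ and consider the tensor-product Chebyshev interpolation operator $I_q$ associated with the $q_\eps \times q_\eps$ grid $\{p_t^B\}$ adapted to $B$. Since $I_q$ reproduces every polynomial of degree at most $q_\eps - 1$ in each variable, and therefore reproduces $P_x$ exactly for every $x \in A$, writing $g(p) = e^{2\pi\i N R^{AB}(x,p)}$ gives
\[
g - I_q g \;=\; (g - P_x) - I_q(g - P_x),
\]
whence the standard quasi-optimality bound
\[
\sup_{p \in B} |g(p) - (I_q g)(p)| \;\le\; (1 + \Lambda_{q_\eps}^2)\,\sup_{p \in B} |g(p) - P_x(p)| \;\le\; (1 + \Lambda_{q_\eps}^2)\,\eps',
\]
where $\Lambda_{q_\eps}$ is the univariate Chebyshev Lebesgue constant; the factor is squared because we use a tensor product in two dimensions. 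Recall the classical estimate $\Lambda_q = O(\log q)$. Taking the supremum over $x \in A$ preserves the bound uniformly on $A \times B$.

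It remains to pick $\eps' = \eps / (1 + \Lambda_{q_\eps}^2)$ so that the right-hand side is at most $\eps$. The main obstacle---and it is a mild one---is closing this loop, since $\eps'$ depends on $q_\eps$, which in turn depends on $\eps'$. Because $\Lambda_{q_\eps} = O(\log q_\eps) = O(\log\log(1/\eps))$, the adjustment from $\eps$ to $\eps'$ only inflates $\log(1/\eps')$ by a doubly-logarithmic factor and leaves $q_\eps \lesssim \log^2(1/\eps)$ intact, yielding the total count $r_\eps = q_\eps^2 \lesssim \log^4(1/\eps)$. The case $w(A) \le 1/\sqrt{N}$ is handled by the symmetric argument: apply the second bullet of Theorem \ref{thm:main} to express the $\alpha$-factors as low-degree monomials in $(x - x_0(A))$, and run the same Chebyshev argument interpolating in $x$ on the grid $\{x_t^A\}$ adapted to $A$.
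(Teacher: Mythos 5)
Your proof is correct and follows essentially the same route as the paper. You invoke Theorem~\ref{thm:main} at a slightly tightened tolerance, observe that for each fixed $x$ the resulting polynomial approximant $P_x$ lies in the tensor-product polynomial space $V_{q_\eps}$, and then exploit the quasi-optimality of Chebyshev interpolation. The only cosmetic difference is that you unpack the Lebesgue-constant argument from scratch (writing $g - I_q g = (g - P_x) - I_q(g - P_x)$ and bounding $\|I_q\|$ by $\Lambda_{q_\eps}^2$), whereas the paper packages the same estimate as Lemma~\ref{lem:cheb} and cites it; both routes yield the identical quasi-optimality factor $1 + O(\log^2 q_\eps)$. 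To absorb that factor you set $\eps' = \eps/(1+\Lambda_{q_\eps}^2)$, which is mildly circular and needs a word about why a fixed point exists; the paper sidesteps this by running the whole argument with $\eps^{1+\delta}$ for a small fixed $\delta>0$, which accomplishes the same bootstrapping with slightly less bookkeeping. In either case, since $q_\eps$ depends on the target accuracy only logarithmically, the adjustment perturbs $\log(1/\eps)$ by a lower-order term and the bound $q_\eps \lesssim \log^2(1/\eps)$, hence $r_\eps = q_\eps^2 \lesssim \log^4(1/\eps)$, survives.
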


The proof of the theorem depends on the following lemma.
\begin{lemma}
  \label{lem:cheb}
  Let $f(y_1,y_2) \in C([0,1]^2)$ and $V_q$ be the space spanned by
  the monomials $y_1^{\alpha_1}y_2^{\alpha_2}$ with $0 \le \alpha_1,
  \alpha_2 <q$. The projection operator mapping $f$ into its Lagrange
  interpolant on the $q\times q$ tensor-product Chebyshev grid obeys 
  \[
  \| f - \Pi_q f\| \le (1+ C\log^2 q) \, \inf_{g\in V_q} \| f- g \|
  \]
  for some numerical constant $C$, where $\|f\| = \sup_{y \in [0,1]^2}
 \, |f(y)|$.
\end{lemma}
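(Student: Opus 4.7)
The plan is to reduce the stated two-dimensional inequality to the well-known one-dimensional Lebesgue-constant estimate for Chebyshev interpolation, via a tensor-product argument. The whole argument is a soft functional-analytic reduction together with one classical input.

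First, I would exploit the fact that $\Pi_q$ is a projection onto $V_q$: any $g \in V_q$ is uniquely determined by its values on the $q\times q$ tensor-product Chebyshev grid (since bivariate polynomials of coordinate degree $<q$ of dimension $q^2$ are matched against $q^2$ interpolation conditions), so $\Pi_q g = g$. Consequently, for any $g \in V_q$,
\[
f - \Pi_q f = (f-g) - \Pi_q(f-g),
\]
which, taking sup-norms, gives
\[
\|f - \Pi_q f\| \le (1 + \|\Pi_q\|)\, \|f - g\|.
\]
Taking the infimum over $g \in V_q$ reduces the claim to the operator-norm bound $\|\Pi_q\| \le C \log^2 q$ on $(C([0,1]^2), \|\cdot\|)$.

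Next, I would use the separated form of the 2D Lagrange basis, $L_i(y) = L_{i_1}(y_1)\,L_{i_2}(y_2)$, to factor $\Pi_q$ as the composition of two one-dimensional Chebyshev interpolation operators, $\Pi_q = \Pi_q^{(1)} \Pi_q^{(2)}$, where $\Pi_q^{(j)}$ interpolates in the $y_j$ variable with the other variable held fixed. A direct check on the definition confirms that this composition reproduces the tensor-product interpolant. The two operators are bounded on $C([0,1]^2)$ in the sup-norm, with each operator norm equal (by freezing the other variable and taking a sup) to the standard one-dimensional Lebesgue constant
\[
\Lambda_q \,=\, \max_{z\in[-1/2,1/2]} \sum_{i=0}^{q-1} |L_i(z;\{z_i\})|.
\]
Submultiplicativity then gives $\|\Pi_q\| \le \|\Pi_q^{(1)}\| \cdot \|\Pi_q^{(2)}\| \le \Lambda_q^2$.

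The main obstacle, and the only nontrivial analytic input, is the classical estimate $\Lambda_q \lesssim \log q$ for Chebyshev-node interpolation, originally due to Bernstein. Its proof rests on writing the Lagrange basis explicitly in terms of Chebyshev polynomials and estimating the resulting Dirichlet-kernel-type sum after the trigonometric substitution $z = \tfrac{1}{2}\cos\theta$; I would invoke this as a standard result rather than rederive it. (Note that the grid being realized on $[-1/2,1/2]$ rather than $[-1,1]$ is immaterial since an affine change of variables leaves Lebesgue constants invariant, and that extending to $[0,1]$ in one coordinate is again purely affine.) Combining $\|\Pi_q\| \le \Lambda_q^2 \le (C'\log q)^2$ with the reduction of the first paragraph yields $\|f - \Pi_q f\| \le (1 + C\log^2 q)\inf_{g\in V_q}\|f - g\|$ for the constant $C = (C')^2$, which is exactly the claim.
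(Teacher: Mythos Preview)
Your argument is correct and is precisely the ``straightforward generalization of the one-dimensional case'' that the paper invokes in lieu of a proof: reduce to bounding $\|\Pi_q\|$ via the projection identity, factor $\Pi_q$ as a tensor product of one-dimensional Chebyshev interpolants, and apply the classical $O(\log q)$ Lebesgue-constant bound (Rivlin) to each factor. There is nothing to add or correct.
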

The proof of this lemma is a straightforward generalization of the one
dimensional case, which can be found in \cite{rivlin-1974-cp}.

\begin{proof}[Proof of Theorem \ref{thm:intp}]
  Suppose that $w(B) \le 1/\sqrt{N}$ and pick $q_\eps = K \log(1/\eps)$
  where $K = C_0 \log(1/\eps)$ is given by \eqref{eq:Kval} in the
  proof of Theorem \ref{thm:main}. We fix $x\in A$, and view
  $e^{2\pi\i N R^{AB}(x,p)}$ as a function of $p\in B$.  Applying
  Lemma \ref{lem:cheb} to $e^{2\pi\i N R^{AB}(x,\cdot)}$ gives
  \[
  \left\| e^{2\pi\i N R^{AB}(x,\cdot)} - \Pi_{q_\eps} e^{2\pi\i N
      R^{AB}(x,\cdot)}\right\| \le (1+C \log^2 q_\eps) \, 
  \inf_{g\in V_{q_\eps}} \left\| e^{2\pi\i N R^{AB}(x,\cdot)} -g
  \right\|.
  \]
  Theorem \ref{thm:main} states that the functions $\{\beta^{AB}_t
  (p)\}_t$ are all monomials of degree less than $q_\eps$. Therefore,
  for a fixed $x$, the low-rank approximation in that theorem belongs
  to $V_{q_\eps}$, and approximates $e^{2\pi\i N R^{AB}(x,\cdot)}$ within
  $\eps$.  Combining this with the previous estimate gives
  \begin{equation}
    \left\| e^{2\pi\i N R^{AB}(x,\cdot)} - \Pi_{q_\eps} e^{2\pi\i N R^{AB}(x,\cdot)}\right\|
    \le (1+C \log^2 q_\eps) \, \eps
    \le (C_1 + C_2 \log^2 (\log (1/\eps))) \, \eps, 
    \label{eq:intp1aux}
  \end{equation}
  where $C_1$ and $C_2$ are two constants independent of $N$ and
  $\eps$. The same analysis applies to the situation where $w(A) \le
  1/\sqrt{N}$; fix $p\in B$ and view $e^{2\pi\i N R^{AB}(\cdot,p)}$ as
  a function of $x \in A$, repeat the same procedure and
  obtain the same error bound.  

  The estimate \eqref{eq:intp1aux} and its analog when $w(A) \le
  1/\sqrt{N}$ are the claims \eqref{eq:intp1} and \eqref{eq:intp2} but
  for the fact that the right-hand side is of the form $(C_1 + C_2
  \log^2 (\log (1/\eps))) \, \eps$ rather than $\eps$. In order to get
  rid of the $C_1 + C_2 \log^2 (\log (1/\eps))$ factor, we can
  repeat the proof with $\eps^{(1+\delta)}$ with a small
  $\delta>0$. As $q_\eps$ only depends on $\eps$ logarithmically, this
  only increases $q_\eps$ by a small constant factor.
\end{proof}

Finally, to obtain a low-rank approximation for the real kernel
$e^{2\pi\i N \Psi(x,p)}$ when $w(B) \le 1/\sqrt{N}$, multiply
\eqref{eq:intp1} with $e^{2\pi\i N \Psi(x_0,p)} \, e^{2\pi\i N
  \Psi(x,p_0)} \, e^{-2\pi\i N \Psi(x_0,p_0)}$ (we use again $x_0$ and
$p_0$ as shorthands for $x_0(A)$ and $p_0(B)$) which gives
\[
\left|
  e^{2\pi\i N \Psi(x,p)} - \sum_t e^{2\pi\i N \Psi(x,p^B_t)} 
  \left(
    e^{-2\pi\i N \Psi(x_0,p^B_t)} \, L^B_t(p) \, e^{2\pi\i N \Psi(x_0,p)}
  \right)
\right| \le \eps,\quad
\forall x\in A, \forall p\in B.
\]
In terms of the notations in \eqref{eq:glr}, the expansion functions
are given by
\begin{equation}
  \alpha^{AB}_t(x) = e^{2\pi\i N \Psi(x,p^B_t)},\quad
  \beta^{AB}_t(p) = e^{-2\pi\i N \Psi(x_0,p^B_t)} \, L^B_t(p) \, e^{2\pi\i N \Psi(x_0,p)}.
\label{eq:ab1}
\end{equation}
This is a special interpolant of the function $e^{2\pi\i N \Psi(x,p)}$ in
the $p$-variable which 1) prefactors the oscillation 2) performs the
interpolation and 3) remodulates the outcome. Following
\eqref{eq:delta}, the expansion coefficients $\{\delta^{AB}_t\}_t$ for
the potential $\{u^B(x), x\in A\}$ should then obey the condition
\begin{equation}
  \delta^{AB}_t \approx 
  \sum_{p\in B} \beta^{AB}_t(p) f(p) =
  e^{-2\pi\i N \Psi(x_0,p^B_t)} \sum_{p\in B} \left( L^B_t(p) \, e^{2\pi\i N \Psi(x_0, p)} \, f(p) \right).
  \label{eq:D1}
\end{equation}
When $w(A)\le 1/\sqrt{N}$, multiply \eqref{eq:intp2} with $e^{2\pi\i
  N \Psi(x_0,p)} \, e^{2\pi\i N \Psi(x,p_0)} \, e^{-2\pi\i N \Psi(x_0,p_0)}$
and obtain
\[
\left|
  e^{2\pi\i N \Psi(x,p)} - \sum_t
  \left(
  e^{2\pi\i N \Psi(x,p_0)} \, L^A_t(x) \, e^{-2\pi\i N \Psi(x^A_t,p_0)}
  \right)
  e^{2\pi\i N \Psi(x^A_t,p)}
\right|
\le \eps,\quad
\forall x\in A, \forall p\in B.
\]
The expansion functions are now
\begin{equation}
  \alpha^{AB}_t(x) = e^{2\pi\i N \Psi(x,p_0)} \, L^A_t(x) \, e^{-2\pi\i N \Psi(x^A_t,p_0)},\quad
  \beta^{AB}_t(p) = e^{2\pi\i N \Psi(x^A_t,p)}.
  \label{eq:ab2}
\end{equation}
The expansion coefficients $\{\delta^{AB}_t\}$ should obey
\begin{equation}
  \delta^{AB}_t \approx
  \sum_{p\in B} \beta^{AB}_t(p) f(p) =
  \sum_{p\in B} e^{2\pi\i N \Psi(x^A_t,p)} f(p) = u^B(x^A_t).
  \label{eq:D2}
\end{equation}

% ----
%------------------------------
\section{Algorithm Description}
\label{sec:algo}

This section presents our algorithm which combines the expansions
introduced in Section \ref{sec:rank} with the butterfly structure from
Section \ref{sec:butterfly}.
\begin{enumerate}
\item {\em Preliminaries}. Construct two quadtrees $T_X$ and $T_P$ for
  $X$ and $P$ as in Figure \ref{fig:ptree}. Each leaf node of $T_X$
  and $T_P$ is of size $1/N \times 1/N$. Since $X$ is a regular
  Cartesian grid, $T_X$ is just a uniform hierarchical partition.

\item {\em Initialization}. Set $A$ to be the root of $T_X$. For each
  leaf box $B \in T_P$, construct the expansion coefficients
  $\{\delta^{AB}_t, 1 \le t \le r_\eps\}$ from \eqref{eq:D1} by
  setting
  \begin{equation}
    \delta^{AB}_t = e^{-2\pi\i N \Psi(x_0(A),p^B_t)} \sum_{p\in B}
    \left( L^B_t(p) \, e^{2\pi\i N \Psi(x_0(A),p)} \, f(p) \right).
    \label{eq:alg2a}
\end{equation}

\item {\em Recursion}. For each $\ell = 1, 2, \ldots,L/2$, construct
  the coefficients $\{\delta^{AB}_t, 1 \le t \le r_\eps\}$ for each
  pair $(A,B)$ with $A$ at level $\ell$ and $B$ at the complementary
  level $L-\ell$ as follows: let $A_p$ be $A$'s parent and $\{B_c, c =
  1, 2, 3, 4\}$ be $B$'s children. For each child, we have available
  from the previous level an approximation of the form
  \[
  u^{B_c}(x) \approx \sum_{t'} e^{2\pi\i N \Psi(x,p^{B_c}_{t'})} \delta^{A_pB_c}_{t'},
  \quad \forall x\in A_p. 
  \]
  Summing over all children gives
  \[
  u^B(x) \approx \sum_c \sum_{t'} e^{2\pi\i N \Psi(x,p^{B_c}_{t'})} \delta^{A_pB_c}_{t'},
  \quad \forall x\in A_p.
  \]
  Since $A \subset A_p$, this is also true for any $x\in A$.  This
  means that we can treat $\{\delta^{A_pB_c}_{t'}\}$ as equivalent
  sources in $B$.  As explained below, we then set the coefficients
  $\{\delta^{AB}_t\}_t $ as
  \begin{equation}
    \delta^{AB}_t = e^{-2\pi\i N \Psi(x_0(A),p^B_t)} \sum_c \sum_{t'}
    L^B_t(p^{B_c}_{t'}) \, e^{2\pi\i N \Psi(x_0(A), p^{B_c}_{t'})} \,
    \delta^{A_pB_c}_{t'}.
    \label{eq:alg2b}
  \end{equation}

\item {\em Switch}. The interpolant in $p$ may be used as the
  low-rank approximation as long as $\ell \le L/2$ whereas the
  interpolant in $x$ is a valid low-rank approximation as soon as
  $\ell \ge L/2$. Therefore, at $\ell = L/2$, we need to switch
  representation.  Recall that for $\ell \le L/2$ the expansion
  coefficients $\{ \delta^{AB}_t, 1 \le t \le r_\eps\}$ may be
  regarded as equivalent sources while for $\ell \ge L/2$, they
  approximate the values of the potential $u^B(x)$ on the Chebyshev
  grid $\{x_t^A, 1 \le t \le r_\eps\}$. Hence, for any pair $(A,B)$
  with $A$ at level $L/2$ (and likewise for $B$), we have
  $\delta_t^{AB} \approx u^B(x_t^A)$ from \eqref{eq:D2} so that we
  may set
  \begin{equation}
    \delta^{AB}_t = \sum_s e^{2\pi\i N \Psi(x^A_t,p^B_s)}\, 
    \delta^{AB}_s
    \label{eq:alg2m}
  \end{equation}
  (we abuse notations here since $\{\delta_t^{AB}\}$ denotes the new
  set of coefficients and $\{\delta_s^{AB}\}$ the older set).
  
\item {\em Recursion (end)}. The rest of the recursion is analogous.
  For $\ell = L/2+1, \ldots, L$, construct the coefficients $\{
  \delta^{AB}_t, 1 \le t \le r_\eps\}$ as follows.  With
  $\{\alpha_t^{AB}\}$ and $\{\beta_t^{AB}\}$ given by \eqref{eq:ab2},
  we have 
  \[
  % \begin{align*}
  u^B(x) = \sum_c u^{B_c}(x)  \approx \sum_{t', c} \alpha_{t'}^{A_p
    B_c}(x) \sum_{p \in B_c} \beta_{t'}^{A_p B_c}(p) f(p)
  \approx \sum_{t', c} \alpha_{t'}^{A_p
    B_c}(x) \delta_{t'}^{A_p B_c}. 
  % \end{align*}
  \]
  Hence, since $\delta^{AB}_t$ should approximate $u^B(x^A_t)$ by
  \eqref{eq:D2}, we simply set
  \[
  \delta_t^{AB} = \sum_{t', c} \alpha_{t'}^{A_p B_c}(x) \delta_{t'}^{A_p
    B_c}.
  \] 
  Substituing $\alpha_t^{AB}$ with its value gives the update
  \begin{equation}
    \delta^{AB}_t = \sum_c e^{2\pi\i N \Psi(x^A_t, p_0(B_c))} \sum_{t'} 
    \left(
      L^{A_p}_{t'}(x^A_t) \, e^{-2\pi\i N \Psi(x^{A_p}_{t'},p_0(B_c))} \, \delta^{A_pB_c}_{t'}
    \right). 
    \label{eq:alg2c}
  \end{equation}

\item {\em Termination}. Finally, we reach $\ell = L$ and set $B$ to
  be the root box of $T_P$. For each leaf box $A$ of $T_X$, we have 
  \[
  u^B(x) \approx \sum_t \alpha_t^{AB}(x) \delta_t^{AB}, \quad x \in A, 
  \]
  where $\{\alpha_t^{AB}\}$ is given by \eqref{eq:ab2}. Hence, for each
  $x \in A$, we set
  \begin{equation}
    u(x) = e^{2\pi\i N \Psi(x,p_0(B))} \sum_t 
    \left(
      L^{A}_t(x) \, e^{-2\pi\i N \Psi(x^A_t,p_0(B))} \, \delta^{AB}_t
    \right). 
    \label{eq:alg2d}
  \end{equation}
\end{enumerate}

In order to justify \eqref{eq:alg2b}, recall that
\[
\left|
  e^{2\pi\i N \Psi(x,p)} - \sum_t e^{2\pi\i N \Psi(x,p^B_t)} \beta_t^{AB}(p)
\right| \le \eps,\quad
\forall x\in A, \forall p\in B,
\] 
where $\beta_t^{AB}(p)$ is given by \eqref{eq:ab1}. Summing the above
inequality over $p \in \{p_{t'}^{B_c}\}_{t', c}$ with weights
$\{\delta_{t'}^{A_p B_c}\}$ gives
\[ u^B(x) \approx \sum_{t} e^{2\pi\i N \Psi(x,p^B_t)} \sum_{c, t'}
\beta_t^{AB}(p_{t'}^{B_c}) \delta_{t'}^{A_p B_c},  
\]
which means that we can set 
\[
\delta_t^{AB} = \sum_{c, t'}
\beta_t^{AB}(p_{t'}^{B_c}) \delta_{t'}^{A_p B_c}.  
\] 
Substituing $\beta_t^{AB}$ with its value gives the update
\eqref{eq:alg2b}.

% \begin{figure}[h!]
%   \begin{center}
%     \includegraphics[height=2in]{butterfly.pdf}
%   \end{center}
%   \caption{Schematic illustration of the butterfly algorithm
%     with 4 levels ($L = 3$).  The tree $T_X$ is on the left and $T_P$
%     is on the right. The levels are paired as indicated so that the
%     product of the sidelengths remains constant. The red line pairs
%     two square boxes $A$ and $B$ at level 2 (shaded in gray); low-rank
%     approximations of the localized kernel and expansion coefficients
%     for the localized potential are computed for each such pair.  The
%     algorithm starts at the root of $T_X$ and at the bottom of
%     $T_P$. It then traverses $T_X$ top down and $T_P$ bottom up, and
%     terminates when the last level (the bottom of $T_X$) is
%     reached. The figure also represents the four children of any box
%     $B$.
%   }
%   \label{fig:algo}
% \end{figure}

The main workload is in \eqref{eq:alg2b} and \eqref{eq:alg2c}.
Because of the tensor product structures, the computations in
\eqref{eq:alg2b} and \eqref{eq:alg2c} can be accelerated by performing
Chebyshev interpolation one dimension at a time, reducing the number
operations from $O(r_\eps^2) = O(q_\eps^4)$ to $O(q_\eps^3)$.  As
there are at most $O(N^2 \log N)$ pairs of boxes $(A,B)$, the
recursion steps take at most $O(r_\eps^{3/2} \, N^2\log N)$
operations. It is not difficult to see that the remaining steps of the
algorithm take at most $O(r_\eps^2 \, N)$ operations.  Hence, with
$r_\eps = O(\log^4(1/\eps))$, this gives an overall complexity
estimate of $O(\log^6(1/\eps) \, N^2 \log N + \log^8(1/\eps) \,
N^2)$. Since the prescribed accuracy $\eps$ is a constant, our
algorithm runs in $O(N^2 \log N)$ time with a constant polylogarithmic
in $\eps$.  Although the dependence of this constant on $\log(1/\eps)$
is quite strong, we would like to emphasize that this is only a worst
case estimate. In practice, and as empirically demonstrated in Section
\ref{sec:results}, this dependence is rather moderate and grows like
$\log(1/\eps)$.

We would like to point out that the values of $L^B_t(p^{B_c}_{t'})$ in
\eqref{eq:alg2b} and of $L^{A_p}_{t'}(x^A_t)$ in \eqref{eq:alg2c} are
both translation and level-independent because of the nested structure
of the quadtree. Therefore, once these values are computed for a
single pair $(A,B)$, they can just be reused for all pairs visited
during the execution of the algorithm. In our implementation, the
values of $L^B_t(p^{B_c}_{t'})$ in \eqref{eq:alg2b} and
$L^{A_p}_{t'}(x^A_t)$ are stored in a Kronecker-product form in order
to facilitate the dimension-wise Chebyshev interpolation discussed in
the previous paragraph.

This algorithm has two main advantages over the approach based on
interpolative decomposition. First, no precomputation is
required. Since the low-rank approximation uses Lagrange interpolation
on fixed tensor-product Chebyshev grids, the functions
$\{\alpha^{AB}_t(x)\}$ and $\{\beta^{AB}_t(p)\}$ are given explicitly
by \eqref{eq:ab1} and \eqref{eq:ab2}.  In turn, this yields explicit
formulas for computing the expansion coefficients
$\{\delta^{AB}_t\}_t$, compare \eqref{eq:alg2a}, \eqref{eq:alg2b}, and
\eqref{eq:alg2c}.  Second, this algorithm is highly efficient in terms
of memory requirement. In the approach based on the interpolative
decomposition method, one needs to store many linear transformations
(one for each pair $(A, B)$) which yields a storage requirement on the
order of $r_\eps^2 \, N^2 \log N$ as observed earlier. The proposed
algorithm, however, only needs to store the expansion coefficients $\{
\delta^{AB}_t\}$.  Moreover, at any point in the execution, only the
expansion coefficients from two consecutive levels are actually
needed. Therefore, the storage requirement is only on the order of
$r_\eps \, N^2$, which allows us to address problems with much larger
sizes.

One advantage of the interpolative decomposition approach is that it
often has a smaller separation rank. The reason is that the low-rank
approximation is optimized for the kernel under study and, therefore,
the computed rank is usually very close to the true separation rank
$r_\eps$. In contrast, our low-rank approximations are based on
tensor-product Chebyshev grids and merely exploit the smoothness of
the function $e^{2\pi\i N R^{AB}(x,p)}$ either in $x$ or in $p$. In
particular, it ignores the finer structure of the kernel $e^{2\pi\i N
  \Psi(x,p)}$ and as a result, the computed separation rank is often
significantly higher. Fortunately, this growth in the separation rank
does not result in a significant increase in the computation time
since the tensor-product structure and the Lagrangian interpolants
dramatically decrease the computational cost. 

The tensor-product Chebyshev grid is also used in the method described
in Example 2 of Section \ref{sec:butterfly}. There, the equivalent
sources are supported on a Chebyshev grid in $B$ and are constructed
by collocating the potential on another Chebyshev grid in $A$. Because
of 1) the tensor-product nature of the grids and 2) the nature of the
Fourier kernel, the matrix representation of this collocation
procedure has an almost $(A,B)$-independent Kronecker product
decomposition. This offers a way of speeding up the computations and
makes it unnecessary to store the matrix representation.
Unfortunately, such an approach would not work for FIOs since the
kernel $e^{2\pi\i N \Psi(x,p)}$ does not have an $(A,B)$-independent
tensor-product decomposition. This is why a major difference is that
we use tensor-product Chebyshev grids only to interpolate the residual
kernel $e^{2\pi\i N R^{AB}(x,p)}$ in $x$ or $p$ depending on which box
is smaller. The important point is that we also keep the main benefits
of that approach.

Up to this point, we have only been concerned with the computation of
FIOs with constant amplitudes. However, our approach can easily be
extended to the general case with variable amplitudes $a(x,k)$ as in 
\begin{equation}
  u(x) = \sum_{k\in \Omega} a(x,k) e^{2\pi\i \Phi(x,k)} f(k), \quad  x\in X.
  \label{eq:allfio}
\end{equation}
In most applications of interest, $a(x,k)$ is a simple object, i.e.~much
simpler than the oscillatory term $e^{2\pi\i \Phi(x,k)}$. A possible
approach is to follow \cite{bao-1996-cpdo,demanet-2008-dsc} where the
amplitude is assumed to have a low-rank separated approximation obeying 
\[
\left| a(x,k) - \sum_{t=1}^{s_\eps} g_t(x) h_t(k)\right| \le \eps,
\]
where the number of terms $s_\eps$ is independent of $N$---the size of
the grids $X$ and $\Omega$. Such an approximation can be obtained
either analytically or through the randomized procedure described in
\cite{candes-2007-fcfio}. An algorithm for computing \eqref{eq:allfio}
may then operate as follows: 
\begin{enumerate}
\item Construct the approximation $a(x,k) \approx \sum_{t=1}^{s_\eps}
  g_t(x) h_t(k)$ with $x\in X$ and $k\in \Omega$.
\item Set $u(x) = 0$ for $x \in X$ and for each $t=1,\ldots,s_\eps$,
  \begin{enumerate}
  \item form the product $f_t(k) = h_t(k) f(k)$ for $k\in \Omega$, 
  \item compute $\sum_k e^{2\pi\i \Phi(x,k)} f_t(k)$ for $x\in
    X$ by applying the above algorithm, 
  \item multiply the result with $g_t(x)$ for $x\in X$, and add this
    product to $u(x)$.
  \end{enumerate}
\end{enumerate}

We would like to point out that the above algorithm is presented in a
form that is conceptually simple. However, when applying the butterfly
algorithm to the functions $\{f_t(k), t=1,\ldots,s_\eps \}$ in the
multiple executions of Step 2(b), the following kernel evaluations are
independent of $\{f_t(k)\}$ and thus performed redundantly:
$e^{-2\pi\i N \Psi(x_0(A),p^B_t)}$ and $e^{2\pi\i N \Psi(x_0(A),
  p^{B_c}_{t'})}$ in \eqref{eq:alg2b}; $e^{2\pi\i N
  \Psi(x^A_t,p^B_s)}$ in \eqref{eq:alg2m}; $e^{2\pi\i N \Psi(x^A_t,
  p_0(B_c))}$ and $e^{-2\pi\i N \Psi(x^{A_p}_{t'},p_0(B_c))}$ in
\eqref{eq:alg2c}. Therefore, in an efficient implementation of the
above algorithm, one should ``vectorize'' the butterfly algorithm to
operate on $\{f_t(k), t=1,\ldots,s_\eps\}$ simultaneously so that
redundant kernel evaluations can be avoided.

% ----------------------------------------------------------
\section{Numerical Results}
\label{sec:results}

This section provides some numerical results to illustrate the
empirical properties of the algorithm. The implementation
is in C++ and all tests are carried out on a desktop computer with a
2.8GHz CPU. 

When computing the matrix-vector product $u(x) = \sum_{k\in \Omega}
e^{2\pi\i \Phi(x,k)} f(k)$, we independently sample the entries of
the input vector $\{f(k), k\in \Omega\}$ from the standard normal
distribution so that the input vector is just white noise. Let
$\{u^a(x), x\in X\}$ be the potentials computed by the algorithm.  To
report on the accuracy, we select a set $S$ of 256 points from $X$ and
estimate the relative error by
\begin{equation}
  \sqrt{ \frac{ \sum_{x\in S} |u(x) - u^a(x) |^2 } { \sum_{x\in S}
      |u(x)|^2 } }.
  \label{eq:err}
\end{equation}

According to the algorithm description in Section \ref{sec:algo}, the
leafs of the quadtree at level $L=\log_2 N$ are of size $1/N \times
1/N$ and each contains a small number of points. However, when the
number of points in a box $B$ is much less than $q_\eps^2$, it does
not make sense to construct the expansion coefficients
$\{\delta^{AB}_t\}$ simply because the sources at these points would
themselves provide a more compact representation. Thus in practice,
the recursion starts from the boxes in $T_P$ that are a couple of
levels away from the bottom so that each box has at least $q_\eps^2$
points in it. Similarly, the recursion stops at the boxes in $T_X$
that are a couple of levels away from the bottom. In general, the
starting and ending levels should depend on the value of $q_\eps$. In
the following examples, we start from level $\log_2N - 3$ and stop at
level $3$ in $T_P$. This choice matches well with the values of
$q_\eps$ (5 to 11) that we use here.

In our first example, we consider the computation of \eqref{eq:dfio}
with the phase function given by
\begin{equation}
  \Phi(x,k) = x \cdot k + \sqrt{c_1^2(x) k_1^2 + c_2^2(x) k_2^2}, \qquad
  \begin{array}{l}
    c_1(x) = (2 + \sin(2\pi x_1) \sin(2\pi x_2))/3, \\
    c_2(x) = (2 + \cos(2\pi x_1) \cos(2\pi x_2))/3. 
  \end{array}
  \label{eq:ex1}
\end{equation}
If $g(x) = \sum_{k \in \Omega} f(k) e^{2\pi \i x \cdot k/N}$ is the
(periodic) inverse Fourier transform of the input, this example models
the integration of $g$ over ellipses where $c_1(x)$ and $c_2(x)$ are
the axis lengths of the ellipse centered at the point $x\in X$. In
truth, the exact formula of this generalized Radon transform contains
an amplitude term $a(x,k)$ involving Bessel functions of the first and
second kinds. Nonetheless, we wish to focus on the main computational
difficulty, the highly oscillatory phase in this example, and simply
set the amplitude $a(x,k)$ to one. Table \ref{tbl:num1} summarizes the
results of this example for different combinations of the grid size
$N$ (the grid is $N \times N$) and of the degree of the polynomial
interpolation $q$.

\begin{table}[h]
  \begin{center}
    \begin{tabular}{|ccccc|}
      \hline
      $(N,q)$ & $T_a$(sec) & $T_d$(sec) & $T_d/T_a$ & $\eps_a$\\
      \hline
      %(64,5)  & 1.89e+0 & 8.00e-1 & 4.23e-1 & 1.12e-2\\
      (256,5) & 6.11e+1 & 3.20e+2 & 5.24e+0 & 1.26e-2\\
      (512,5) & 2.91e+2 & 5.59e+3 & 1.92e+1 & 1.56e-2\\
      (1024,5)& 1.48e+3 & 9.44e+4 & 6.37e+1 & 1.26e-2\\
      (2048,5)& 6.57e+3 & 1.53e+6 & 2.32e+2 & 1.75e-2\\
      (4096,5)& 3.13e+4 & 2.43e+7 & 7.74e+2 & 1.75e-2\\
      \hline
      %(64,7)  & 3.91e+0 & 8.00e-1 & 2.05e-1 & 6.39e-4\\
      (256,7) & 1.18e+2 & 3.25e+2 & 2.76e+0 & 7.57e-4\\
      (512,7) & 5.54e+2 & 5.47e+3 & 9.87e+0 & 6.68e-4\\
      (1024,7)& 2.76e+3 & 9.48e+4 & 3.44e+1 & 6.45e-4\\
      (2048,7)& 1.23e+4 & 1.46e+6 & 1.19e+2 & 8.39e-4\\
      (4096,7)& 5.80e+4 & 2.31e+7 & 3.99e+2 & 8.18e-4\\
      \hline
      %(64,9)  & 1.04e+1 & 8.00e-1 & 7.70e-2 & 2.50e-5\\
      (256,9) & 2.46e+2 & 3.10e+2 & 1.26e+0 & 3.15e-5\\
      (512,9) & 1.03e+3 & 5.19e+3 & 5.06e+0 & 3.14e-5\\
      (1024,9)& 4.95e+3 & 9.44e+4 & 1.91e+1 & 3.45e-5\\
      (2048,9)& 2.21e+4 & 1.48e+6 & 6.71e+1 & 4.01e-5\\
      (4096,9)& 1.02e+5 & 2.23e+7 & 2.18e+2 & 4.21e-5\\
      \hline
      %(64,11) & 2.12e+1 & 8.00e-1 & 3.78e-2 & 8.12e-7\\
      (256,11)& 4.66e+2 & 3.07e+2 & 6.59e-1 & 7.34e-7\\
      (512,11)& 1.69e+3 & 4.53e+3 & 2.68e+0 & 7.50e-7\\
      (1024,11)&8.33e+3 & 9.50e+4 & 1.14e+1 & 5.23e-7\\
      (2048,11)&3.48e+4 & 1.49e+6 & 4.27e+1 & 5.26e-7\\
      \hline
    \end{tabular}
  \end{center}
  \caption{Computational results with the phase function given by \eqref{eq:ex1}.
    $N\times N$ is the size of the domain;
    $q$ is the size of the Chebyshev interpolation grid in each dimension;
    $T_a$ is the running time of the algorithm in seconds; $T_d$ is
    the estimated running time of the direct evaluation method and 
    $T_d/T_a$ is the speedup factor; finally, $\eps_a$ is the accuracy estimated
    with \eqref{eq:err}.
  }
  \label{tbl:num1}
\end{table}

Next, we use the algorithm described at the end of Section
\ref{sec:algo} to study the performance in the more general setup of
variable amplitudes \eqref{eq:allfio}. The second example is the exact
formula for integrating over circles with radii $c(x)$ centered at
the points $x\in X$
\[
u(x) = 
\sum_{k\in \Omega} a_+(x,k) e^{2\pi\i \Phi_+(x,k)} f(k) + 
\sum_{k\in \Omega} a_-(x,k) e^{2\pi\i \Phi_-(x,k)} f(k)
\]
where the amplitudes and phases are given by
\begin{eqnarray}
  && a_\pm(x,k) = \left(J_0(2\pi c(x)|k|) \pm i Y_0(2\pi c(x) |k| \right) \, e^{\mp 2\pi i c(x)|k|} \nonumber\\
  && \Phi_\pm(x,k) = x\cdot k + c(x)|k|   \label{eq:ex2}\\
  && c(x) = (3 + \sin(2\pi x_1) \sin(2\pi x_2))/4. \nonumber
\end{eqnarray}
(Above the functions $J_0$ and $Y_0$ are special Bessel functions. The
Appendix in \cite{candes-2007-fcfio} details the derivation of these
formulas). We use the randomized procedure described in
\cite{candes-2007-fcfio} to construct the low-rank separated
approximation for $a_\pm(x,k)$. For an accuracy of 1e-7, the resulting
approximation contains only 3 terms. Table \ref{tbl:num2} summarizes
the results of this example for different combinations of $N$ and $q$.

\begin{table}[h]
  \begin{center}
    \begin{tabular}{|ccccc|}
      \hline
      $(N,q)$ & $T_a$(sec) & $T_d$(sec) & $T_d/T_a$ & $\eps_a$\\
      \hline
      %(64,5)  & 4.29e+0 & 1.20e+1 & 2.80e+0 & 1.34e-2\\
      (256,5) & 1.39e+2 & 3.20e+3 & 2.31e+1 & 1.48e-2\\
      (512,5) & 7.25e+2 & 5.20e+4 & 7.17e+1 & 1.62e-2\\
      (1024,5)& 3.45e+3 & 8.34e+5 & 2.42e+2 & 1.90e-2\\
      \hline
      %(64,7)  & 8.97e+0 & 1.18e+1 & 1.32e+0 & 4.95e-4\\
      (256,7) & 2.69e+2 & 3.21e+3 & 1.19e+1 & 4.71e-4\\
      (512,7) & 1.38e+3 & 5.20e+4 & 3.78e+1 & 7.30e-4\\
      (1024,7)& 6.43e+3 & 8.35e+5 & 1.30e+2 & 6.35e-4\\
      \hline
      %(64,9)  & 1.85e+1 & 1.20e+1 & 6.50e-1 & 2.33e-5\\
      (256,9) & 5.23e+2 & 3.20e+3 & 6.12e+0 & 1.59e-5\\
      (512,9) & 2.49e+3 & 5.17e+4 & 2.08e+1 & 2.97e-5\\
      (1024,9)& 1.15e+4 & 8.32e+5 & 7.25e+1 & 1.75e-5\\
      \hline
      %(64,11) & 4.68e+1 & 1.18e+1 & 2.53e-1 & 7.37e-7\\
      (256,11)& 1.04e+3 & 3.18e+3 & 3.06e+0 & 8.03e-7\\
      (512,11)& 4.10e+3 & 5.11e+4 & 1.24e+1 & 9.38e-7\\
      (1024,11)&1.84e+4 & 8.38e+5 & 4.57e+1 & 8.01e-7\\
      \hline
    \end{tabular}
  \end{center}
  \caption{Computational results with the amplitudes and phase functions 
    given by 
    \eqref{eq:ex2}.
  } 
  \label{tbl:num2}
\end{table}

From these tables, the first observation is that the accuracy is well
controlled by the size of the Chebyshev grid, and that the estimated
accuracy $\eps_a$ improves on average by a factor of 30 every time $q$
is increased by a factor of 2.  In practical applications, one often
specifies the accuracy $\eps_a$ instead of the grid size $q$. To adapt
to this situation, the quantity \eqref{eq:err} can be used to estimate
the error; whenever the error is too large, one can simply increase
the value of $q$ until the desired accuracy is reached. The second
observation is that the accuracy decreases only slightly when $N$
increases, indicating that the algorithm is numerically stable. This
is due to the fact that the Lebesgue constant of the Chebyshev
interpolation is almost optimal, i.~e.~the Chebyshev interpolation
operator has almost the minimum operator norm among all Lagrange
interpolants of the same order \cite{rivlin-1974-cp}.

These results show that the empirical running time of our algorithm
closely follows the $O(N^2 \log N)$ asymptotic complexity. Each time
we double $N$, the size of the grid quadruples. The
corresponding running time and speedup factor increase by a factor
roughly equal to 4 as well. We note that for large values of $N$ which
are of interest to us and to practitioners, the numerical results show
a very substantial speedup factor over direct evaluation. For
instance, for $4,096 \times 4,096$ grids, we gain three order of
magnitudes since one can get nearly two digits of accuracy with a
speedup factor exceeding 750.

The article \cite{candes-2007-fcfio} proposed an $O(N^{2.5}\log N)$
approach based on the partitioning of the frequency domain into
$\sqrt{N}$ conical region. Though the time complexity of this former
algorithm may not be optimal, we showed that it was efficient in parts
because its main computational component, the nonuniform fast Fourier
transform, is highly optimized.  Comparing Tables 4 and 5 in
\cite{candes-2007-fcfio}\footnote{The results in Tables 4 and 5 of
  \cite{candes-2007-fcfio} were obtained on a desktop with a 2.6GHz
  CPU, which is slightly slower yet comparable to the computer used
  for the tests in this section. The implementation of the nonuniform
  fast Fourier transform in \cite{candes-2007-fcfio} was written in
  C++ and complied as a MEX-function. Finally, the two examples are
  not exactly similar but this slight difference is unessential.}
with the numerical results presented here, we observe that both
approaches take roughly the same time for $N=256$ and $512$. For $N
\le 256$, the approach based on conical partitioning is faster as its
complexity has a smaller constant. For $N \ge 512$, however, the
current approach based on the butterfly algorithm clearly outperforms
our former approach.

It is straightforward to generalize our algorithm to higher
dimensions. In three dimensions for example, the main modification is
to use a three dimensional Chebyshev grid to interpolate $e^{2\pi\i
  R^{AB}(x,k)}$.  Consider again a simple 3D example modeling the
integration over spheres with varying radii in which the now
6-dimensional phase function $\Phi(x,k)$, $x, k \in \R^3$, is given by
\begin{equation}
  \Phi(x,k) = x\cdot k + c(x) |k|, \qquad 
  c(x)=(3 + \sin(2 \pi x_1)\sin(2 \pi x_2)\sin(2 \pi x_3))/4. \nonumber
  \label{eq:ex3}
\end{equation}
Our 3D numerical results are reported in Table \ref{tbl:num3}. In this
setup, we see that our approach offers a significant speedup even for
moderate values of $N$.
\begin{table}[h]
  \begin{center}
    \begin{tabular}{|ccccc|}
      \hline
      $(N,q)$ & $T_a$(sec) & $T_d$(sec) & $T_d/T_a$ & $\eps_a$\\
      \hline
      (64,7)  & 1.79e+3 & 7.33e+3 & 4.10e+0 & 3.32e-3\\
      (128,7) & 1.58e+4 & 4.77e+5 & 3.02e+1 & 4.06e-3\\
      (256,7) & 1.44e+5 & 2.97e+7 & 2.06e+2 & 3.96e-3\\
      \hline
    \end{tabular}
  \end{center}
  \caption{Computational results in 3 dimensions with the phase function given by \eqref{eq:ex3}.}
  \label{tbl:num3}
\end{table}

%----------------------------------------------------------
\section{Conclusions and Discussions}
\label{sec:concl}

This paper introduced a novel and accurate algorithm for evaluating
discrete FIOs.  Underlying this approach is a key mathematical
property, which says that the kernel, restricted to special subdomains
in time and frequency, is approximately of very low-rank. Our strategy
operationalizes this fact by using a multiscale partitioning of the
time and frequency domain together with the butterfly structure to
achieve an $O(N^2 \log N)$ asymptotic complexity.

A different way to achieve a near-optimal $O(N^2\log N)$ complexity
might be to use the curvelet transform
\cite{candes-2004-nfc,candes-2006-fdct} of Cand\`es and Donoho, or the
wave atoms \cite{demanet-2007-wasop} of Demanet and Ying.  In 
\cite{candes-2003-cfio,candes-2005-crwpos}, Cand\`{e}s and Demanet
proved that the curvelet representation of FIOs is optimally sparse
(the wave atom representation also offers the same optimality), a
property which relies on the role played by the second dyadic
decomposition of Stein and his collaborators
\cite{stein-1993-ha}. Whether one can operationalize this mathematical
insight into an efficient algorithm seems an interesting direction for
future research.

The geometric low-rank property together with the butterfly algorithm
appear to be a very powerful combination to obtain fast algorithms for
computing certain types of highly oscillatory integrals. We already
discussed the work of O'Neil and Rokhlin \cite{oneil-2007-ncabft} who
have used the butterfly algorithm to design fast special transforms,
and of Ying who has extended this approach to develop fast algorithms
for Fourier transforms with sparse data \cite{ying-2008-sftba} and
Fourier transforms with summation constraints
\cite{ying-2008-fcpft}. Clearly, it would be of interest to identify
wide classes of problems for which this general approach may prove
powerful. 

{\small

\subsection*{Acknowledgments}
E.~C.~is partially supported by the Waterman Award from the National
Science Foundation and by an ONR grant N00014-08-1-0749. L.~D.~is
partially supported by a National Science Foundation grant DMS-0707921. L.~Y.~is partially supported by an Alfred
P.~Sloan Fellowship and a National Science Foundation grant
DMS-0708014.

\bibliographystyle{abbrv}
\bibliography{ref}
}
\end{document}